\title{Universal end-compactifications of locally finite graphs}
\author{Jan Ouborny}
\author{Max Pitz}
\address{Universit\"at Hamburg, Department of Mathematics, Bundesstrasse 55 (Geomatikum), 20146 Hamburg, Germany}
\email{jan.ouborny@studium.uni-hamburg.de}
\email{max.pitz@uni-hamburg.de}
\let\polishlcross=\l
\def\l{\ifmmode\ell\else\polishlcross\fi}
\let\emptyset=\varnothing
\let\theta=\vartheta
\let\rho=\varrho
\let\phi=\varphi
\def\NN{\mathbb N}
\def\cC{{\mathscr C}}
\def\cP{{\mathcal P}}
\newcommand{\Set}[1]{{\left\lbrace {#1} \right\rbrace}}
\def\set#1:#2{\Set{{#1} \colon {#2}}}
\newcommand{\blowup}[1]{\mathbf{G}}
\DeclareFontFamily{U}  {MnSymbolC}{}
\DeclareSymbolFont{MnSyC}         {U}  {MnSymbolC}{m}{n}
\DeclareFontShape{U}{MnSymbolC}{m}{n}{
    <-6>  MnSymbolC5
   <6-7>  MnSymbolC6
   <7-8>  MnSymbolC7
   <8-9>  MnSymbolC8
   <9-10> MnSymbolC9
  <10-12> MnSymbolC10
  <12->   MnSymbolC12}{}
\DeclareMathSymbol{\powerset}{\mathord}{MnSyC}{180}
\theoremstyle{plain}
\newtheorem{thm}{Theorem}[section]
\newtheorem{prop}[thm]{Proposition}
\newtheorem{cor}[thm]{Corollary}
\newtheorem{lemma}[thm]{Lemma}
\theoremstyle{definition}
\begin{document}

\begin{abstract}
 We construct a locally finite connected graph whose Freudenthal compactification  is universal for the class of completely regular continua, a class also known in the literature under the name  thin or graph-like continua. 
\end{abstract}

\vspace*{-36pt}
\maketitle

\section{Introduction}

The purpose of this paper is twofold: First, to establish that the class $\mathcal{G}$ of \emph{completely regular} continua, also known in the literature as \emph{thin} or \emph{graph-like} continua, admits a universal object. This answers a recent question in the literature \cite{ABOBAKER2021107408}. Surprisingly, this universal object can already be found amongst the Freudenthal compactification of locally finite connected graphs. Over the previous two decades, the latter class has received considerable attention in graph theory, see for example the survey \cite{DSurv,DSurv2}, but not so much in topology. In this light, our second objective is to advertise the most important methods -- especially the various inverse limit methods -- underpinning the theory of Freudenthal compactifications of locally finite graphs, and demonstrate in which sense they may be used to obtain a complete understanding of the class $\mathcal{G}$ of topological continua mentioned above.

Recall that an object $X$ is \textit{universal} for a class of objects $\cP$ and an embedding relation $\leq$
if $X$ is contained in $\cP$ and every object $Y \in \cP$ can be embedded into $X$ with respect to $\leq$. 
The most important universal topological continua with respect to homeomorphic embeddings are the \emph{Hilbert cube} (universal for Peano continua \cite[1.4]{Nadler}), the \emph{Sierpiński curve} (universal for 1-dimensional planar continua \cite[1.11]{Nadler}), the \emph{Menger curve} (universal for  1-dimensional continua \cite{mayer1986menger}), or Wazewski's dendrite \cite[10.37 \& 10.49]{Nadler}  (universal for acyclic Peano continua).

In the field of graph theory the best known universal object is the Rado graph, which is universal for the class of countable graphs and the induced subgraph relation, see \cite[8.3.1]{diestel2015book} and \cite{rado1964universal}.
However, when sticking with fairly strong embedding relations such as the (induced) subgraph relation, then -- contrary to the situation in topology where universal objects abound -- further affirmative results for the existence of universal graphs are rather elusive: there is no subgraph-universal object for the class of locally finite connected graphs (by an argument attributed to de Bruijn in \cite{rado1964universal}), and 
responding to a question by Ulam, 
Pach showed that there is no subgraph-universal object for the class of countable planer graphs \cite{pach1981problem}. 

Thus, it makes sense to further weaken the graph embedding relation and search for universal graphs under topological embeddings. 
While Krill showed that there is still no universal planar graph under the topological embedding relation, \cite{krill2022universal},  the existence of a topologically universal locally finite connected graph now becomes a well-known exercise, and Lehner established in \cite{lehner2022universal} the existence of a topologically universal locally finite planar graph.

When thinking about locally finite graphs $G$ as topological objects, it has become apparent in the previous decade that in many situations it is advantageous to consider the Freudenthal compactification of $G$, denoted in this paper by $|G|$, see \cite{DSurv,DSurv2}. And so the natural question arises whether there are also universal such Freudenthal compactifications (for the topological embedding relation).

As our first result, we answer this question affirmatively by constructing a locally finite connected graph $\mathbf{G}$ such that every locally finite connected graph $G$ admits a topological embedding $G \hookrightarrow \mathbf{G}$ that extends to a topological embedding of $|G| \hookrightarrow |\mathbf{G}|$, see Theorem~\ref{thm_universal1}.

As our second result, we show in Theorem~\ref{thm_universalgraphlike} that this $|\mathbf{G}|$ is even universal for the much larger class of Peano continua known in the literature under various names such as \emph{graph-like continua}, \textit{completely regular continua} or \emph{thin continua}. This universality result answers a recent question of Abobaker and Charatonik \cite{ABOBAKER2021107408}, and also strengthens an earlier result by Espinoza et al.~\cite[Theorem~15]{espinoza2020} that every completely regular / graph-like continuum embeds into the Freudenthal compactification of \emph{some} locally finite connected graph. 

The key to our proof is a simplified inverse limit representation theorem for graph-like continua, where the factor spaces are finite graphs and where the bonding maps correspond to the contraction of one \emph{single} edge, Theorem~\ref{thm_invlimit}.

\section{Freudenthal compactifications of graphs}
For undefined graph theoretic terms we follow the terminology in \cite{diestel2015book}, and in particular \cite[Chapter~8]{diestel2015book} for ends in graphs; but see also \S\ref{sec_21} below. The symbol $[X]^k$ denotes the collection of $k$-element subsets of a set $X$.

Graphs can be viewed combinatorially as well as topologically, and we will move freely between both perspectives. Combinatorially, a graph $G=(V,E)$ consists of a (vertex-)set $V=V(G)$ and an (edge-)set $E=E(G)$ (where $V$ and $E$ are formally required to be disjoint) and a map $E \to [V]^1 \cup [V]^2$ assigning each edge $e$ either a single endvertex (in which case $e$ is a \emph{loop}) or a pair of endvertices. Note that our graphs may have parallel edges -- when we want to emphasize this, we also speak of \emph{multigraphs}. A graph $G$ is \emph{finite} if both $V(G)$ and $E(G)$ are finite, and it is \emph{locally finite} if every vertex in $V(G)$ is incident with only finitely many edges in $E(G)$.

To every combinatorial graph $G=(V,E)$ we can associate a topological graph, also denoted by $G$, by considering $G$ as the corresponding $1$-complex. If $G$ is finite, the corresponding topological space is compact, and we end up with a graph continuum in the sense that it is a Peano continuum with only finitely many branch points, cf.~\cite[\S IX]{Nadler}.

We write $K_n$ for the complete graph on $n$ vertices ($n \in \mathbb{N}$), i.e.\ the graph on $n$ vertices where any pair of vertices are connected by one edge. Given a graph $G$ and a set of vertices $U \subseteq V(G)$ we write $G[U]$ for the induced subgraph in the vertex set $U$.

For sets of vertices $A$ and $B$ (not necessarily disjoint) in a graph $G$, we define an \emph{$A-B$ path} in $G$ to be a path where one end vertex lies in $A$, the other in $B$ and which is otherwise disjoint from $A \cup B$. If $A$ and $B$ are disjoint, $E(A,B)$ denotes the set of edges in $G$ with one endpoint in $A$ and the other in $B$.

\subsection{Ends and end spaces}
\label{sec_21}
Given a locally finite, connected graph $G$, the space $|G|$ is the Freudenthal compactification of the (locally compact, $\sigma$-compact, metrizable) 1-complex $G$. However, when dealing with combinatorial aspects of these spaces, it is useful to have a more concrete, hands-on description of these spaces, and this is what we will discuss next (see also the textbook \cite[\S 8.6]{diestel2015book}).

A $1$-way infinite path is called a \emph{ray} and the subrays of a ray are its \emph{tails}. Two rays $R,R'$ in a graph $G = (V,E)$ are \emph{equivalent} if there exist infinitely many disjoint $R-R'$ paths between them in $G$; the corresponding equivalence classes of rays are the \emph{ends} of $G$. If $\omega$ is an end of $G$ and $R\in\omega$, we call $R$ an $\omega$-ray. The set of ends of a graph $G$ is denoted by $\Omega = \Omega(G)$.

If $X \subseteq V$ is finite and $\omega \in \Omega$, 
there is a unique component of $G-X$ that contains a tail of every $\omega$-ray. We denote this component by $C(X,\omega)$ and say that $\omega$ \emph{lives in $C(X,\omega)$}. We let $\Omega(X,\omega) = \set{\phi \in \Omega}:{C(X,\phi) = C(X,\omega)}$ denote the set of all ends that live in $C(X,\omega)$. We put $\hat{C}(X,\omega) = C(X,\omega) \cup \Omega(X,C)$.
The collection of singletons $\{v\}$ for $v \in V(G)$ together with all sets of the form $\hat{C}(X,\omega)$ for finite $X\subseteq V$ and $\omega \in \Omega(G)$ forms a basis for a topology on $V(G) \cup \Omega(G)$.
This topology is Hausdorff, and it is \emph{zero-dimensional} in the sense that it has a basis consisting of closed-and-open sets. 

We now describe the standard way to extend this topology on $V(G) \cup \Omega(G)$ to a topology on $|G| = G \cup \Omega(G)$, the graph $G$ together with its ends. 
This topology, called \textsc{Top}, has a basis formed by all open sets of $G$ considered as a $1$-complex, together with basic open neighbourhoods for ends of the form
\begin{align*}
    \hat{C}_*(X,\omega) := C(X,\omega) \cup \Omega(X,\omega) \cup E_*(X, C(X,\omega)),
\end{align*}
where $E_*(X, C(X,\omega))$ denotes any union of half-open intervals of all the edges from the edge cut $E(X, C(X,\omega))$ with endpoint in $C(X,\omega)$. For a proof of the following theorem see e.g.\ the textbook \cite[Theorem~8.6.1]{diestel2015book}.

\begin{thm}
\label{thm_freudenthal}
For every locally finite, connected graph $G$, the space $|G|$ is a Peano continuum, i.e.\ a compact, metrizable,  connected and locally connected space.
\end{thm}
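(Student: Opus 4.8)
The plan is to verify the four asserted properties of $|G|$ — compactness, metrizability, connectedness, and local connectedness — and then invoke the Hahn–Mazurkiewicz characterization. First I would establish \emph{compactness}. One can do this directly via the finite intersection property: given a family of basic closed sets with the finite intersection property, use the local finiteness of $G$ together with a diagonal/compactness argument over an exhaustion $X_1 \subseteq X_2 \subseteq \cdots$ of $V(G)$ by finite vertex sets with $\bigcup_n X_n = V(G)$. At each stage only finitely many components of $G - X_n$ meet the family, and König's Lemma applied to the tree of these components produces either a vertex, an interior edge point, or an end lying in the intersection. Alternatively, and more cleanly, one can first show that $V(G) \cup \Omega(G)$ with the basis described above is compact (again by König's Lemma on the component tree), then note that $|G|$ is obtained from a compact space by replacing each point of an edge's interior — a controlled modification that preserves compactness because every edge, with its two endpoints, is a compact interval and local finiteness keeps the edges "locally bunched."

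Next I would address \emph{metrizability}. Since $|G|$ will be shown to be compact and Hausdorff, by Urysohn's metrization theorem it suffices to check second countability (equivalently, since we have compact Hausdorff, a countable basis). As $G$ is locally finite and connected it has countably many vertices and edges, so the $1$-complex $G$ is second countable; and the end space $\Omega(G)$ embeds in the countable product $\prod_n \{\text{components of } G - X_n\}$, hence is second countable. Taking the countably many basic open sets of $G$ together with the sets $\hat C_*(X_n,\omega)$ — of which there are only countably many, since for fixed $n$ there are finitely many components of $G - X_n$ and one may restrict the "half-open interval" data to a fixed rational-length choice — yields a countable basis for \textsc{Top}. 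For \emph{connectedness}, note $G$ itself is connected (being a connected $1$-complex), $G$ is dense in $|G|$ (every basic neighbourhood of an end meets $G$), and the closure of a connected set is connected.

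The main obstacle is \emph{local connectedness}, which fails for general compactifications and is exactly where the Freudenthal construction earns its keep. At interior edge points and at vertices (whose star is a finite union of half-edges) local connectedness is immediate from local finiteness. The real content is local connectedness at an end $\omega$: I must show that the basic neighbourhoods $\hat C_*(X,\omega)$ can be taken connected, or at least that each contains a connected neighbourhood of $\omega$. The key combinatorial input is that $C(X,\omega)$ is a \emph{connected} subgraph of $G$; then $C(X,\omega) \cup \Omega(X,\omega)$ together with the attached half-edges $E_*(X,C(X,\omega))$ is the closure in $|G|$ of the connected set $C(X,\omega)$ (one checks every end in $\Omega(X,\omega)$ and every boundary half-edge lies in that closure), hence connected; and these sets form a neighbourhood basis at $\omega$ as $X$ ranges over finite vertex sets with $\omega \in \hat C(X,\omega)$. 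With all four properties in hand, $|G|$ is a compact, connected, locally connected metrizable space, i.e.\ a Peano continuum by the Hahn–Mazurkiewicz theorem; for a fully detailed execution one may of course defer to \cite[Theorem~8.6.1]{diestel2015book}.
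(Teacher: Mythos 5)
The paper gives no proof of this theorem at all; it simply defers to \cite[Theorem~8.6.1]{diestel2015book}, and your outline is essentially the standard argument found there (compactness via a finite-subcover/K\"onig-type argument over an exhaustion, second countability plus compact Hausdorff for metrizability, density of $G$ for connectedness, and connectedness of the basic end-neighbourhoods $\hat C_*(X,\omega)$ for local connectedness). The one soft spot is your ``controlled modification preserves compactness'' step for passing from $V(G)\cup\Omega(G)$ to $|G|$: as phrased it is not an argument, but it is easily repaired --- finitely many basic neighbourhoods of ends already cover all of $G$ except a finite subgraph (each $\hat C_*(X,\omega)$ swallows every edge of $C(X,\omega)$ together with the half-edges of the cut), and the remaining finite $1$-complex is compact. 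With that made explicit, your sketch is a correct proof of the cited result.
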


If one starts with a summable length function on the edges of a locally finite, connected graph $G$, then $|G|$ will be homeomorphic to the metric completion of the length space $G$, see \cite{agelosedgelength}.

\subsection{An inverse limit description}

Given an infinite, locally finite connected graph $G=(V,E)$, let $S_0 \subset S_1 \subset S_2 \subset \cdots$ be a sequence of finite sets of vertices of $G$ such that $V(G) = \bigcup_{n \in \NN} S_n$. Write $G_n$ for the contraction of $G$ obtained by contracting each component of $G-S_n$ to a vertex, deleting any new loops that arise in the contraction but keeping parallel edges. The vertices of $G_n$ contracted from components of $G-S_n$ will be called the \textit{dummy vertices} of $G_n$. Let $\pi_n \colon G \to G_n$ denote the (continuous) contraction map.

Note that there are also natural contraction maps $f_n \colon G_{n+1} \to G_n$, which send the vertices of $G_{n+1}$ to the vertices of $G_n$ that contain them as subsets, which are the identity on the edges of $G_{n+1}$ that are also edges of $G_n$, and send any other edge of $G_{n+1}$ to the vertex of $G_n$ that contains both its endvertices. In other words, we get an inverse sequence $(G_n)_{n \in \NN}$ of finite (and hence compact) connected graphs, with onto bonding maps $f_n \colon G_{n+1} \to G_n$. 

\begin{thm}
\label{thm_limits}
For every locally finite, connected graph $G$, the space $|G|$ is homeomorphic to the inverse limit $\varprojlim G_n$.
\end{thm}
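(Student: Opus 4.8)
The plan is to construct an explicit homeomorphism $\Phi \colon |G| \to \varprojlim G_n$ using the universal property of the inverse limit, and then check it is a continuous bijection between compact Hausdorff spaces. First I would observe that the contraction maps $\pi_n \colon G \to G_n$ are compatible with the bonding maps in the sense that $f_n \circ \pi_{n+1} = \pi_n$; this is immediate from the definitions, since both sides send a vertex $v$ to the vertex of $G_n$ whose underlying set contains $v$, and agree on edges. The key extra point is that each $\pi_n$ extends continuously to a map $|G| \to G_n$: an $\omega$-ray eventually lies inside a single component of $G - S_n$, so it is natural to send the end $\omega$ to the dummy vertex of $G_n$ corresponding to $C(S_n,\omega)$ (and a vertex or edge-point of $G$ to its image as before). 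Continuity of this extension at an end $\omega$ follows because the basic neighbourhood $\hat{C}_*(S_n,\omega)$ is mapped into an arbitrarily small neighbourhood of that dummy vertex once one passes to $S_m$ with $m \geq n$; I will need local finiteness here to ensure the edge cut $E(S_n, C(S_n,\omega))$ is finite, so that $\hat{C}_*(S_n,\omega)$ really is a neighbourhood and its image is controlled. Since the extended maps (still called $\pi_n$) satisfy $f_n \circ \pi_{n+1} = \pi_n$, the universal property of $\varprojlim G_n$ yields a continuous map $\Phi = (\pi_n)_{n} \colon |G| \to \varprojlim G_n$.

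Next I would show $\Phi$ is injective. Two distinct points of $|G|$ are separated already at some finite stage: two distinct interior points of edges, or a vertex and an edge-point, or two vertices, are separated by $\pi_n$ for $n$ large enough that $S_n$ contains the relevant vertices; a vertex $v$ and an end $\omega$ are separated by $\pi_n$ once $v \in S_n$, since then $v$ is a non-dummy vertex of $G_n$ while $\omega$ maps to a dummy vertex; and two distinct ends $\omega \neq \omega'$ are separated by the finite set $X$ witnessing $C(X,\omega) \neq C(X,\omega')$, hence by $\pi_n$ for any $n$ with $S_n \supseteq X$, since then they land in distinct components of $G - S_n$ and thus distinct dummy vertices of $G_n$. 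For surjectivity I would take a thread $(x_n)_{n \in \NN} \in \varprojlim G_n$ with $f_n(x_{n+1}) = x_n$ and produce a preimage. If some $x_n$ is an interior point of an edge $e$ of $G_n$, then $e$ is also an edge of $G$ and all later $x_m$ equal $x_n$ under the identification, so the corresponding edge-point of $G$ works; similarly if some $x_n$ is a non-dummy vertex, it is a vertex of $G$ mapping to the thread. Otherwise every $x_n$ is a dummy vertex of $G_n$, corresponding to a component $D_n$ of $G - S_n$; compatibility with the $f_n$ forces $D_{n+1} \subseteq D_n$, so $(D_n)_n$ is a decreasing sequence of non-empty connected subgraphs. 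A standard compactness/König's-lemma argument then produces an end $\omega$ with $C(S_n,\omega) = D_n$ for all $n$ (this uses $V(G) = \bigcup_n S_n$, so that the $D_n$ shrink past every vertex), and $\Phi(\omega) = (x_n)_n$.

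Finally, $\Phi$ is a continuous bijection from the compact space $|G|$ (Theorem~\ref{thm_freudenthal}) to the Hausdorff space $\varprojlim G_n$, hence a homeomorphism. The main obstacle I anticipate is the surjectivity step, specifically the construction of an end from a decreasing sequence of dummy vertices: one must check that the intersection behaviour of the $D_n$ is genuinely captured by a single end, which requires using both that each $D_n$ is connected and non-empty and that the $S_n$ exhaust $V(G)$, and one should be slightly careful that distinct threads do not accidentally collapse to the same end — but that is exactly what the injectivity argument above rules out. A secondary technical point worth spelling out is the continuity of the extended $\pi_n$ at ends and the verification that the basic open sets $\hat{C}_*(X,\omega)$ of \textsc{Top} behave well under these maps; everything else is bookkeeping with the definitions of $G_n$ and $f_n$.
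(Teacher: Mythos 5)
Your proposal is correct and follows essentially the same route as the paper: extend the contractions $\pi_n$ continuously to $|G|$ by sending an end $\omega$ to the dummy vertex for $C(S_n,\omega)$, assemble these into a map to $\varprojlim G_n$, and conclude via the fact that a continuous bijection from a compact space to a Hausdorff space is a homeomorphism. The only difference is that you prove surjectivity by hand (analysing threads and extracting an end from a nested sequence of components via K\H{o}nig's lemma), whereas the paper simply invokes the standard fact that compatible continuous surjections from a compact space induce a surjection onto the inverse limit; your injectivity check is also slightly more thorough in treating vertices and edge points explicitly, which the paper leaves implicit.
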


\begin{proof}
Every projection $\pi_n \colon G \to G_n$ extends to a continuous surjection $\tilde{\pi}_n \colon |G| \to G_n$, namely by sending every end $\omega$ of $|G|$ to the dummy vertex in $G_n$ corresponding to the component $C(S_n,\omega)$.
By definition of the basic open neighbourhoods of ends, this extension is continuous.
By standard inverse limit arguments,  see e.g.~\cite[2.22]{Nadler}, this yields a continuous surjection
$$ \pi \colon |G| \to \varprojlim G_n, \; x \mapsto (\tilde{\pi}_n(x))_{n \in \NN}.$$
As for any two distinct ends $\omega \neq \omega'$ of $G$ there is some $S_n$ with $C(S_n , \omega) \neq C(S_n, \omega')$ and hence $\tilde{\pi}_n ( \omega) \neq \tilde{\pi}_n ( \omega')$, we get that $\pi$ is injective. As a continuous bijection between compact Hausdorff spaces (Theorem~\ref{thm_freudenthal}), it follows that $\pi$ is a homeomorphism.
\end{proof}

\section{Topologically universal locally finite graphs}

\subsection{An example}

The existence of topologically universal locally finite connected graphs is well-known, see e.g.~\cite[Chapter~8, Exercise~49]{diestel2015book}. As a warm-up, we present yet another construction of topologically universal locally finite connected graph that we will later generalise.

Consider the disjoint union of complete graphs $K_n$ ($n \in \NN$), and insert all possible edges between any successive $K_n$ and $K_{n+1}$, see Figure~\ref{fig_example}. Call the resulting graph $\mathbf{G}$. 

\begin{figure}[h]
    \centering
    \begin{tikzpicture}[scale=1]
     \foreach \point in {
(0, 0.0),
(3, -0.3),
(3, 0.3),
(6, -0.6),
(6, 0.0),
(6, 0.6),
(9, -0.8999999999999999),
(9, -0.29999999999999993),
(9, 0.30000000000000004),
(9, 0.8999999999999999),
(12, -1.2),
(12, -0.6),
(12, 0.0),
(12, 0.5999999999999999),
(12, 1.2)
} {
            \def\this{vertex-\point}
            \node (\this) at \point  [circle,fill=black, inner sep=1.5pt] {} ;
        }
\draw[thick] (0, 0.0)--(3, -0.3);
\draw[thick] (0, 0.0)--(3, 0.3);
\draw[thick] (3, -0.3)--(6, -0.6);
\draw[thick] (3, -0.3)--(6, 0.0);
\draw[thick] (3, -0.3)--(6, 0.6);
\draw[thick] (3, 0.3)--(6, -0.6);
\draw[thick] (3, 0.3)--(6, 0.0);
\draw[thick] (3, 0.3)--(6, 0.6);
\draw[thick] (6, -0.6)--(9, -0.8999999999999999);
\draw[thick] (6, -0.6)--(9, -0.29999999999999993);
\draw[thick] (6, -0.6)--(9, 0.30000000000000004);
\draw[thick] (6, -0.6)--(9, 0.8999999999999999);
\draw[thick] (6, 0.0)--(9, -0.8999999999999999);
\draw[thick] (6, 0.0)--(9, -0.29999999999999993);
\draw[thick] (6, 0.0)--(9, 0.30000000000000004);
\draw[thick] (6, 0.0)--(9, 0.8999999999999999);
\draw[thick] (6, 0.6)--(9, -0.8999999999999999);
\draw[thick] (6, 0.6)--(9, -0.29999999999999993);
\draw[thick] (6, 0.6)--(9, 0.30000000000000004);
\draw[thick] (6, 0.6)--(9, 0.8999999999999999);
\draw[thick] (9, -0.8999999999999999)--(12, -1.2);
\draw[thick] (9, -0.8999999999999999)--(12, -0.6);
\draw[thick] (9, -0.8999999999999999)--(12, 0.0);
\draw[thick] (9, -0.8999999999999999)--(12, 0.5999999999999999);
\draw[thick] (9, -0.8999999999999999)--(12, 1.2);
\draw[thick] (9, -0.29999999999999993)--(12, -1.2);
\draw[thick] (9, -0.29999999999999993)--(12, -0.6);
\draw[thick] (9, -0.29999999999999993)--(12, 0.0);
\draw[thick] (9, -0.29999999999999993)--(12, 0.5999999999999999);
\draw[thick] (9, -0.29999999999999993)--(12, 1.2);
\draw[thick] (9, 0.30000000000000004)--(12, -1.2);
\draw[thick] (9, 0.30000000000000004)--(12, -0.6);
\draw[thick] (9, 0.30000000000000004)--(12, 0.0);
\draw[thick] (9, 0.30000000000000004)--(12, 0.5999999999999999);
\draw[thick] (9, 0.30000000000000004)--(12, 1.2);
\draw[thick] (9, 0.8999999999999999)--(12, -1.2);
\draw[thick] (9, 0.8999999999999999)--(12, -0.6);
\draw[thick] (9, 0.8999999999999999)--(12, 0.0);
\draw[thick] (9, 0.8999999999999999)--(12, 0.5999999999999999);
\draw[thick] (9, 0.8999999999999999)--(12, 1.2);

\draw (3, 0) ellipse (0.3cm and 0.6cm);
\draw (6, 0) ellipse (0.35cm and 0.8999999999999999cm);
\draw (9, 0) ellipse (0.4cm and 1.2cm);
\draw (12, 0) ellipse (0.5cm and 1.6cm);

\node () at (13, 0) {$\dots$} ;
\end{tikzpicture}
     \vspace{-1cm}
    \caption{A topologically universal locally finite connected graph (edges inside ellipses are omitted).}
    \label{fig_example}
\end{figure}
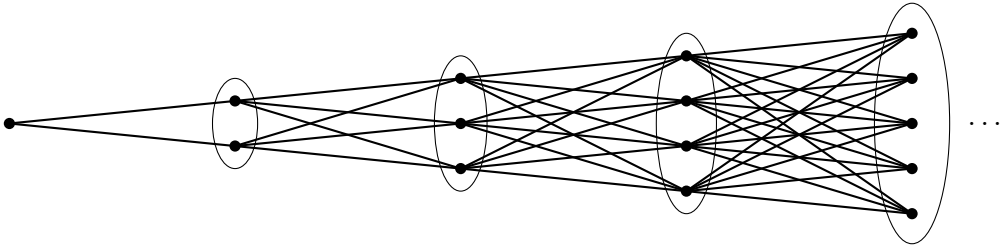

\begin{prop}
\label{prop_warum-up}
Every locally finite connected graph can be topologically embedded into $\mathbf{G}$.
\end{prop}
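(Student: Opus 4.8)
\section*{Proof proposal}

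The plan is to build the embedding by hand, processing the vertices of $G$ one at a time and realising every edge of $G$ as an arc routed through $\mathbf{G}$. We may assume $G$ is infinite, since otherwise $G$ is a subgraph of $K_n$ for some $n\in\NN$, hence of $\mathbf{G}$. As $G$ is connected and locally finite it is countable; fix an enumeration $V(G)=\{x_1,x_2,\dots\}$. Write $B_n$ for the vertex set of the $n$-th copy of $K_n$ inside $\mathbf{G}$, so that $|B_n|=n$, the graph $\mathbf{G}[B_n]$ is complete, and $\mathbf{G}$ contains every edge between $B_n$ and $B_{n+1}$; the crucial point is that $|B_n|\to\infty$, so arbitrarily far out we find blocks with arbitrarily many vertices, and between consecutive blocks arbitrarily many internally disjoint "tracks".

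I would then construct, recursively, numbers $m_1<m_2<\cdots$ and a partial embedding after step $k$ satisfying the following invariant: the vertices $\iota(x_1),\dots,\iota(x_k)$ are distinct vertices of $\mathbf{G}$ lying in $B_1\cup\cdots\cup B_{m_k}$; every edge of the induced subgraph $G[\{x_1,\dots,x_k\}]$ has been realised as an arc, and together with the $\iota(x_i)$ these arcs form a topological embedding of that finite graph into $\mathbf{G}[B_1\cup\cdots\cup B_{m_k}]$; and for every \emph{pending} edge of $G$ (one with exactly one endvertex among $x_1,\dots,x_k$) there is a partial arc from the image of that endvertex to a vertex of $B_{m_k}$, all these partial arcs being pairwise internally disjoint, disjoint from the arcs and vertex-images already fixed, and ending at pairwise distinct vertices of $B_{m_k}$. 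In step $k+1$ note that $x_{k+1}$ has finite degree, say with $r$ neighbours among $x_1,\dots,x_k$ (corresponding to pending arcs $P_{e_1},\dots,P_{e_r}$) and $s$ later neighbours; since the total number of pending edges is finite, one can pick $m_{k+1}$ large enough that the blocks $B_{m_k},\dots,B_{m_{k+1}}$ have enough vertices to: extend every currently pending arc forward along disjoint tracks through these new blocks; place $\iota(x_{k+1})$ as a fresh vertex in some block $B_j$ with $m_k<j\le m_{k+1}$ and let each of $P_{e_1},\dots,P_{e_r}$ terminate at $\iota(x_{k+1})$, using the complete join between $B_{j-1}$ and $B_j$ (and the clique on $B_j$ if a path needs to change vertex before reaching $B_j$); spawn $s$ new pending arcs out of $\iota(x_{k+1})$ into $B_{j+1}$; and finally steer every still-pending arc so that it ends at a distinct vertex of $B_{m_{k+1}}$. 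Each move only ever uses edges inside a block or between consecutive blocks, so the completeness of $\mathbf{G}$ on blocks and between successive blocks is exactly what makes the routing possible, and the growth $|B_n|\to\infty$ guarantees there is always enough room.

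Taking $\iota$ to be the union of all these partial data gives a well-defined map: vertex-images are fixed once chosen, each arc is only extended (never altered) at later stages, and every edge of $G$ is eventually realised as a complete arc. One then checks $\iota$ is a topological embedding of the $1$-complex $G$: distinct vertices have distinct images, the arcs representing distinct edges meet only in shared endvertices by construction, $\iota$ is continuous (at a vertex $v$ this uses local finiteness, so that a small star of $v$ maps into a small star of $\iota(v)$ in $\iota(G)$), and it is open onto $\iota(G)$, hence a homeomorphism onto its image. The only genuinely delicate part is the bookkeeping in the recursion — keeping track of which vertex of $B_{m_k}$ each pending arc currently occupies and verifying that all arcs stay pairwise disjoint — but this is entirely routine once one commits to carrying the invariant above; the conceptual content is simply that the unbounded block sizes of $\mathbf{G}$ supply, at every finite stage, as many disjoint routing tracks as one could need. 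I would also remark that this greedy routing scheme is precisely the template to be generalised later in the paper.
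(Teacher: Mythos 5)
Your proposal is correct, but it organises the construction differently from the paper. The paper fixes a root $v$ and works with the BFS distance classes $D^n$, precomputing a function $h$ so that the entire layer $D^n$ is embedded into the single block $K_{h(n)}$ (possible since $h(n)\geq |D^n|$ and that block is complete, so intra-layer edges are just edges of the block), and then realises all of $E(D^n,D^{n+1})$ at once as $|E(D^n,D^{n+1})|\leq h(n)$ disjoint monotone paths through the intermediate blocks. Because every edge of $G$ joins vertices in the same or consecutive layers, there is never any deferred work: each edge is completed at the step in which its outer endpoint is placed. You instead take an arbitrary enumeration of $V(G)$ and maintain a set of \emph{pending} half-realised edges, each carried forward as a partial arc ending at a distinct vertex of the current frontier block. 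Both arguments rest on the same two facts --- consecutive blocks are completely joined, and $|K_n|\to\infty$ supplies as many disjoint tracks as any finite stage requires --- so the mathematical content is identical; the trade-off is that your scheme needs no global quantity like $h$ and no canonical layering, at the cost of the pending-arc invariant (distinctness of the frontier vertices, disjointness from everything already built, and room in $B_{j}$ and $B_{j+1}$ when $r$ arcs converge on $\iota(x_{k+1})$ and $s$ new ones are spawned), which you correctly identify as routine but which is exactly the bookkeeping the layer-by-layer formulation avoids. One small point to tidy: you place $\iota(x_{k+1})$ in $B_j$ with $m_k<j\le m_{k+1}$ but then spawn arcs into $B_{j+1}$, so you should require $j<m_{k+1}$ (or simply choose $j$ large enough that $B_{j+1}$ accommodates all surviving and newly spawned arcs); this does not affect correctness.
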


\begin{proof}
Let $G=(V,E)$ be a locally finite connected graph and let $v$ be a vertex of $G$.
We denote by $D^n$ the distance classes of $v$ in $G$, and by $D^{\leq n}$ we denote the union over the first $n$ distance classes.
First, we define a strictly increasing function $h \colon \NN \to \NN$ that determines in which $K_{h(n)}$ each $D_n$ will be embedded:
\[
    h(n) :=
   \begin{cases}
        \max\Set{h(n-1)+1, |E(D^n, D^{n+1})|, |D^{n}|} & \text{if } n > 0, \\
        d(v) & \text{if } n = 0. 
   \end{cases}
\]
We build the desired topological embedding $G \hookrightarrow \mathbf{G}$ by recursively constructing a sequence of topological embeddings 
$$\phi_n \colon D^{\leq n} \to \mathbf{G}\left[\bigcup_{k=1}^{h(n)} K_k\right]$$ for $n \in \NN$ such that
\begin{enumerate}
    \item \label{warmup_prop_extension} if $n \geq 1$, then $\phi_{n}$ extends $\phi_{n-1}$, i.e. $\phi_{n}\upharpoonright D^{\leq n-1} = \phi_{n-1}$, 
    \item \label{warumup_prop_distance_embed} every distance class $D^k$ with $k \leq n$ embeds under $\phi_n$ into $K_{h(k)}$, and
    \item \label{warmup_prop_subgraph_embed} 
    if $n \geq 1$, then the subgraph $G[D^{n-1} \cup D^{n}]$ embeds under $\phi_n$ into $\displaystyle \mathbf{G}\left[\bigcup^{h(n)}_{k=h(n-1)}K_k\right]$.
\end{enumerate}
To start our construction we embed $D^0=\Set{v}$ into $K_{h(0)}$.
Clearly, all properties are satisfied.

Assume that $\phi_n$ has been constructed for some $n \in \NN$. 
To construct $\phi_{n+1}$, we will extend $\phi_n$ as follows:
First, we embed $D^{n+1}$ into $K_{h(n+1)}$. 
Now we have to map each edge of $E(D^n, D^{n+1})$ to a corresponding $T^{h(n)}-T^{h(n+1)}$ path.
If $h(n+1)=h(n)+1$, it is clear how to map an edge of $E(D^n, D^{n+1})$.
Otherwise we first observe that for any $n < m$ the number of disjoint $K_n$--$K_m$ paths in $\mathbf{G}$ is $|K_n|$. 
Since $h(n) \geq |E(D^n, D^{n+1})|$, we get a set $\cP$ of $|E(D^n, D^{n+1})|$ disjoint $K_{h(n)+1}$--$K_{h(n+1)-1}$ paths. 
Then we choose pairwise distinct paths $P_e$ of $\cP$ for every $e=xy \in E(D^n, D^{n+1})$.
In order to obtain our desired paths, we extend each $P_e$ by one edge at each endvertex to connect it to $\phi_{n+1}(x)$ (resp. $\phi_{n+1}(y)$).
By property (3) of $\phi_n$, the paths intersect the image of $\phi_n$ only with their endpoints in $D^n$, and so $\phi_{n+1}$ is indeed a topological embedding.
This is possible, since each level of $\mathbf{G}$ is completely connected with every adjacent level. 
Clearly, properties (1) -- (3) are satisfied.

By property (\ref{warmup_prop_extension})
it is now clear that $\phi = \bigcup \phi_n \colon G \to \mathbf{G}$ is the desired embedding.
\end{proof}

\subsection{Blow-ups of trees}

A \emph{tree} is a connected, acyclic graph. The \emph{tree order} of a tree $T$ with root $r$ is a partial order $\leq_T$ on $V(T)$ defined by setting $t \leq_T s$ if $t$ lies on the unique path $rTs$ from $r$ to $s$ in $T$. Given $n \in \mathbb{N}$, the \emph{$n$th level} $T^n$ of $T$ is the set of vertices at distance $n$ from $r$ in $T$, and by $T^{\leq n}$ we denote the union over the first $n$ levels. The set $T^{<n}$ is defined in analogy. The \emph{down-closure} of a vertex $t$ is the set $\lceil t \rceil := \{\,s \in T \colon s \leq t\,\}$; its \emph{up-closure} is the set $\lfloor t \rfloor := \{\,s \in T \colon t \leq s\,\}$. The down-closure of $t$ is always a finite chain, the vertex set of the path $rTt$. A ray $R \subseteq T$ starting at the root is called a \emph{normal ray} of $T$.

Let $t$ be a node of $T$. The \emph{children} of $t$ are the neighbours of $t$ that belong to $\lfloor t \rfloor$, and if $t \neq r$ we call the unique neighbour of $t$ in $\lceil t \rceil$ its \textit{parent}.

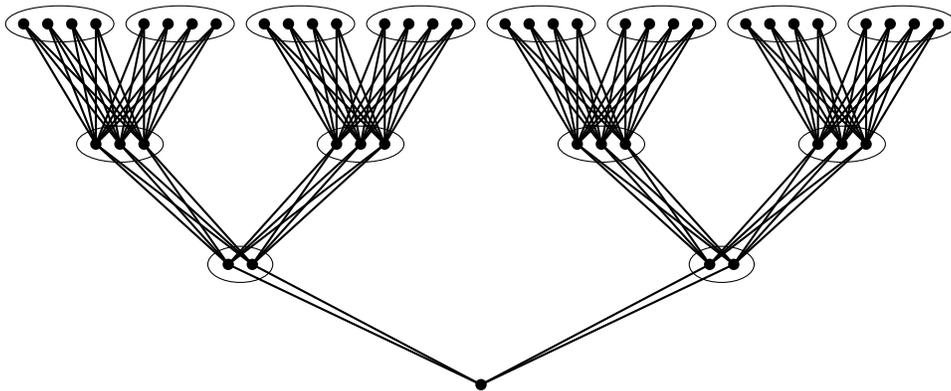
\begin{figure}[h]
    \centering
    \begin{tikzpicture}[scale=0.8, transform shape]
     \foreach \point in {
	(4.0, 0),
	(7.8, 2),
	(8.200000000000001, 2),
	(9.6, 4),
	(10.0, 4),
	(10.4, 4),
	(10.4, 6),
	(10.8, 6),
	(11.200000000000001, 6),
	(11.6, 6),
	(8.4, 6),
	(8.8, 6),
	(9.200000000000001, 6),
	(9.6, 6),
	(5.6, 4),
	(6.0, 4),
	(6.3999999999999995, 4),
	(6.4, 6),
	(6.800000000000001, 6),
	(7.199999999999999, 6),
	(7.6, 6),
	(4.4, 6),
	(4.800000000000001, 6),
	(5.199999999999999, 6),
	(5.6, 6),
	(-0.2, 2),
	(0.2, 2),
	(1.6, 4),
	(2.0, 4),
	(2.4, 4),
	(2.4, 6),
	(2.8, 6),
	(3.1999999999999997, 6),
	(3.6, 6),
	(0.3999999999999999, 6),
	(0.7999999999999998, 6),
	(1.2, 6),
	(1.6, 6),
	(-2.4, 4),
	(-2.0, 4),
	(-1.6, 4),
	(-1.6, 6),
	(-1.2000000000000002, 6),
	(-0.8, 6),
	(-0.3999999999999999, 6),
	(-3.6, 6),
	(-3.2, 6),
	(-2.8000000000000003, 6),
	(-2.4, 6)
} {
            \def\this{vertex-\point}
            \node (\this) at \point  [circle,fill=black, inner sep=1.9pt] {} ;
        }
\draw (8.0, 2) ellipse (0.54cm and 0.3cm);
\draw (10.0, 4) ellipse (0.72cm and 0.3cm);
\draw (11.0, 6) ellipse (0.8999999999999999cm and 0.3cm);
\draw (9.0, 6) ellipse (0.8999999999999999cm and 0.3cm);
\draw (6.0, 4) ellipse (0.72cm and 0.3cm);
\draw (7.0, 6) ellipse (0.8999999999999999cm and 0.3cm);
\draw (5.0, 6) ellipse (0.8999999999999999cm and 0.3cm);
\draw (0.0, 2) ellipse (0.54cm and 0.3cm);
\draw (2.0, 4) ellipse (0.72cm and 0.3cm);
\draw (3.0, 6) ellipse (0.8999999999999999cm and 0.3cm);
\draw (1.0, 6) ellipse (0.8999999999999999cm and 0.3cm);
\draw (-2.0, 4) ellipse (0.72cm and 0.3cm);
\draw (-1.0, 6) ellipse (0.8999999999999999cm and 0.3cm);
\draw (-3.0, 6) ellipse (0.8999999999999999cm and 0.3cm);
\draw[thick] (4.0, 0)--(7.8, 2);
\draw[thick] (4.0, 0)--(8.200000000000001, 2);
\draw[thick] (7.8, 2)--(9.6, 4);
\draw[thick] (8.200000000000001, 2)--(9.6, 4);
\draw[thick] (7.8, 2)--(10.0, 4);
\draw[thick] (8.200000000000001, 2)--(10.0, 4);
\draw[thick] (7.8, 2)--(10.4, 4);
\draw[thick] (8.200000000000001, 2)--(10.4, 4);
\draw[thick] (9.6, 4)--(10.4, 6);
\draw[thick] (10.0, 4)--(10.4, 6);
\draw[thick] (10.4, 4)--(10.4, 6);
\draw[thick] (9.6, 4)--(10.8, 6);
\draw[thick] (10.0, 4)--(10.8, 6);
\draw[thick] (10.4, 4)--(10.8, 6);
\draw[thick] (9.6, 4)--(11.200000000000001, 6);
\draw[thick] (10.0, 4)--(11.200000000000001, 6);
\draw[thick] (10.4, 4)--(11.200000000000001, 6);
\draw[thick] (9.6, 4)--(11.6, 6);
\draw[thick] (10.0, 4)--(11.6, 6);
\draw[thick] (10.4, 4)--(11.6, 6);
\draw[thick] (9.6, 4)--(8.4, 6);
\draw[thick] (10.0, 4)--(8.4, 6);
\draw[thick] (10.4, 4)--(8.4, 6);
\draw[thick] (9.6, 4)--(8.8, 6);
\draw[thick] (10.0, 4)--(8.8, 6);
\draw[thick] (10.4, 4)--(8.8, 6);
\draw[thick] (9.6, 4)--(9.200000000000001, 6);
\draw[thick] (10.0, 4)--(9.200000000000001, 6);
\draw[thick] (10.4, 4)--(9.200000000000001, 6);
\draw[thick] (9.6, 4)--(9.6, 6);
\draw[thick] (10.0, 4)--(9.6, 6);
\draw[thick] (10.4, 4)--(9.6, 6);
\draw[thick] (7.8, 2)--(5.6, 4);
\draw[thick] (8.200000000000001, 2)--(5.6, 4);
\draw[thick] (7.8, 2)--(6.0, 4);
\draw[thick] (8.200000000000001, 2)--(6.0, 4);
\draw[thick] (7.8, 2)--(6.3999999999999995, 4);
\draw[thick] (8.200000000000001, 2)--(6.3999999999999995, 4);
\draw[thick] (5.6, 4)--(6.4, 6);
\draw[thick] (6.0, 4)--(6.4, 6);
\draw[thick] (6.3999999999999995, 4)--(6.4, 6);
\draw[thick] (5.6, 4)--(6.800000000000001, 6);
\draw[thick] (6.0, 4)--(6.800000000000001, 6);
\draw[thick] (6.3999999999999995, 4)--(6.800000000000001, 6);
\draw[thick] (5.6, 4)--(7.199999999999999, 6);
\draw[thick] (6.0, 4)--(7.199999999999999, 6);
\draw[thick] (6.3999999999999995, 4)--(7.199999999999999, 6);
\draw[thick] (5.6, 4)--(7.6, 6);
\draw[thick] (6.0, 4)--(7.6, 6);
\draw[thick] (6.3999999999999995, 4)--(7.6, 6);
\draw[thick] (5.6, 4)--(4.4, 6);
\draw[thick] (6.0, 4)--(4.4, 6);
\draw[thick] (6.3999999999999995, 4)--(4.4, 6);
\draw[thick] (5.6, 4)--(4.800000000000001, 6);
\draw[thick] (6.0, 4)--(4.800000000000001, 6);
\draw[thick] (6.3999999999999995, 4)--(4.800000000000001, 6);
\draw[thick] (5.6, 4)--(5.199999999999999, 6);
\draw[thick] (6.0, 4)--(5.199999999999999, 6);
\draw[thick] (6.3999999999999995, 4)--(5.199999999999999, 6);
\draw[thick] (5.6, 4)--(5.6, 6);
\draw[thick] (6.0, 4)--(5.6, 6);
\draw[thick] (6.3999999999999995, 4)--(5.6, 6);
\draw[thick] (4.0, 0)--(-0.2, 2);
\draw[thick] (4.0, 0)--(0.2, 2);
\draw[thick] (-0.2, 2)--(1.6, 4);
\draw[thick] (0.2, 2)--(1.6, 4);
\draw[thick] (-0.2, 2)--(2.0, 4);
\draw[thick] (0.2, 2)--(2.0, 4);
\draw[thick] (-0.2, 2)--(2.4, 4);
\draw[thick] (0.2, 2)--(2.4, 4);
\draw[thick] (1.6, 4)--(2.4, 6);
\draw[thick] (2.0, 4)--(2.4, 6);
\draw[thick] (2.4, 4)--(2.4, 6);
\draw[thick] (1.6, 4)--(2.8, 6);
\draw[thick] (2.0, 4)--(2.8, 6);
\draw[thick] (2.4, 4)--(2.8, 6);
\draw[thick] (1.6, 4)--(3.1999999999999997, 6);
\draw[thick] (2.0, 4)--(3.1999999999999997, 6);
\draw[thick] (2.4, 4)--(3.1999999999999997, 6);
\draw[thick] (1.6, 4)--(3.6, 6);
\draw[thick] (2.0, 4)--(3.6, 6);
\draw[thick] (2.4, 4)--(3.6, 6);
\draw[thick] (1.6, 4)--(0.3999999999999999, 6);
\draw[thick] (2.0, 4)--(0.3999999999999999, 6);
\draw[thick] (2.4, 4)--(0.3999999999999999, 6);
\draw[thick] (1.6, 4)--(0.7999999999999998, 6);
\draw[thick] (2.0, 4)--(0.7999999999999998, 6);
\draw[thick] (2.4, 4)--(0.7999999999999998, 6);
\draw[thick] (1.6, 4)--(1.2, 6);
\draw[thick] (2.0, 4)--(1.2, 6);
\draw[thick] (2.4, 4)--(1.2, 6);
\draw[thick] (1.6, 4)--(1.6, 6);
\draw[thick] (2.0, 4)--(1.6, 6);
\draw[thick] (2.4, 4)--(1.6, 6);
\draw[thick] (-0.2, 2)--(-2.4, 4);
\draw[thick] (0.2, 2)--(-2.4, 4);
\draw[thick] (-0.2, 2)--(-2.0, 4);
\draw[thick] (0.2, 2)--(-2.0, 4);
\draw[thick] (-0.2, 2)--(-1.6, 4);
\draw[thick] (0.2, 2)--(-1.6, 4);
\draw[thick] (-2.4, 4)--(-1.6, 6);
\draw[thick] (-2.0, 4)--(-1.6, 6);
\draw[thick] (-1.6, 4)--(-1.6, 6);
\draw[thick] (-2.4, 4)--(-1.2000000000000002, 6);
\draw[thick] (-2.0, 4)--(-1.2000000000000002, 6);
\draw[thick] (-1.6, 4)--(-1.2000000000000002, 6);
\draw[thick] (-2.4, 4)--(-0.8, 6);
\draw[thick] (-2.0, 4)--(-0.8, 6);
\draw[thick] (-1.6, 4)--(-0.8, 6);
\draw[thick] (-2.4, 4)--(-0.3999999999999999, 6);
\draw[thick] (-2.0, 4)--(-0.3999999999999999, 6);
\draw[thick] (-1.6, 4)--(-0.3999999999999999, 6);
\draw[thick] (-2.4, 4)--(-3.6, 6);
\draw[thick] (-2.0, 4)--(-3.6, 6);
\draw[thick] (-1.6, 4)--(-3.6, 6);
\draw[thick] (-2.4, 4)--(-3.2, 6);
\draw[thick] (-2.0, 4)--(-3.2, 6);
\draw[thick] (-1.6, 4)--(-3.2, 6);
\draw[thick] (-2.4, 4)--(-2.8000000000000003, 6);
\draw[thick] (-2.0, 4)--(-2.8000000000000003, 6);
\draw[thick] (-1.6, 4)--(-2.8000000000000003, 6);
\draw[thick] (-2.4, 4)--(-2.4, 6);
\draw[thick] (-2.0, 4)--(-2.4, 6);
\draw[thick] (-1.6, 4)--(-2.4, 6);
\end{tikzpicture}
    \vspace{-1cm}
    \caption{A blowup of the binary tree (edges inside ellipses are omitted). }
    \label{fig_blowup}
\end{figure}

To construct our desired topologically universal graph $\mathbf{G}$, we will replace vertices of trees by copies of complete graphs such that we increase connectivity in the upper levels (but also keep the tree's end structure).
Let $T$ be a rooted tree. 
Now we obtain our desired graph $\blowup{T}$ by replacing every vertex $t \in T^n$ by a copy $K(t)$ of $K_{n+1}$, a complete graph on ${n+1}$ vertices, and joining each $K(t)$ completely to all $K(t')$ with $tt' \in E(T)$, see also Figure~\ref{fig_blowup}.
Formally, $\mathbf{G}$ is the graph on
\[
 \bigcup_{n \geq 0} (T^n \times K_{n+1})
\]
in which two vertices $(t, v)$ and $(t', v')$ are adjacent if and only if $tt' \in E(T)$, or $t=t'$. 
For a subset $S \subseteq T$ we write $\mathbf{G}[S]:=\mathbf{G}[\bigcup_{s \in S} K(s)]$, and 
for a vertex $t \in T$, we  write $\blowup{T}_{\lfloor t \rfloor }$ short for $\mathbf{G}[\lfloor t \rfloor ]$.

For later use, we isolate the following simple connectivity property of $\mathbf{G}$. Let us call a path $P \subseteq \mathbf{G}$ \emph{monotone} if $P$ intersects every \emph{level} $\mathbf{G}[T^{n}]$ of $\mathbf{G}$ in at most one point.

\begin{lemma}
\label{lem_connectivity}
If $X \subseteq \mathbf{G}[T^{\geq n}]$ is a set of vertices such that $|X \cap \mathbf{G}[T^k]| \leq n$ for all $k \geq n$, then there exist monotone paths between any two vertices of distinct $K(t)$ and $K(t')$ with $t \leq_T t'$ in $\mathbf{G} - X$.
\end{lemma}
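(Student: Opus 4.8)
The plan is to first determine the possible shape of a monotone path in $\mathbf{G}$, and then to build the required path greedily, choosing one vertex per level and exploiting that consecutive levels of $\mathbf{G}$ are completely joined. Fix $t\leq_T t'$ with $t\neq t'$ and vertices $a\in V(K(t))\setminus X$ and $b\in V(K(t'))\setminus X$ (there is nothing to prove otherwise). Since $t\leq_T t'$ and $t\neq t'$, the node $t$ lies strictly below $t'$, so its level $m$ satisfies $m<m'$, where $m'$ is the level of $t'$; let $t=s_0,s_1,\dots,s_\ell=t'$ (with $\ell=m'-m$) be the unique path from $t$ to $t'$ in $T$, so that $s_i\in T^{m+i}$ and $s_{i+1}$ is a child of $s_i$.

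\textbf{Shape of a monotone path.} By the definition of $\mathbf{G}$, an edge joins two vertices $(s,v),(s',v')$ only when $s=s'$ — in which case both lie on the same level of $\mathbf{G}$ — or when $ss'\in E(T)$ — in which case $s$ and $s'$ are neighbours in $T$, hence lie on consecutive levels, and so do the two vertices. A monotone path meets each level of $\mathbf{G}$ in at most one vertex, so it cannot use an edge of the first kind; therefore consecutive vertices on a monotone path lie on consecutive levels, and since no level recurs the level sequence is strictly monotone. Consequently a monotone path from $a\in K(t)$ to $b\in K(t')$ passes through levels $\mathbf{G}[T^{m}],\mathbf{G}[T^{m+1}],\dots,\mathbf{G}[T^{m'}]$ in exactly one vertex each; moreover, if $(u_i,v_i)$ is the vertex it uses on level $m+i$, then $u_iu_{i+1}\in E(T)$ for all $i$ (as $u_i\neq u_{i+1}$), with $u_0=t$ and $u_\ell=t'$, which forces $u_i=s_i$. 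Conversely — and this is what trivialises the construction — any choice of a single vertex $(s_i,v_i)$ from each $K(s_i)$ with $(s_0,v_0)=a$ and $(s_\ell,v_\ell)=b$ yields a monotone $a$--$b$ path in $\mathbf{G}$, because consecutive copies $K(s_i),K(s_{i+1})$ are joined completely in $\mathbf{G}$ (as $s_is_{i+1}\in E(T)$).

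\textbf{Avoiding $X$.} It remains to pick, for every $0<i<\ell$, a vertex of $K(s_i)$ outside $X$ (the endpoints $a,b$ are outside $X$ by choice). The copy $K(s_i)$ is a complete graph on $k+1$ vertices, where $k$ is the level of $s_i$, and $K(s_i)\subseteq\mathbf{G}[T^{k}]$. If $k<n$, then $X\cap\mathbf{G}[T^{k}]=\emptyset$ because $X\subseteq\mathbf{G}[T^{\geq n}]$; if $k\geq n$, then $|K(s_i)\cap X|\leq|X\cap\mathbf{G}[T^{k}]|\leq n<k+1=|V(K(s_i))|$. In either case $V(K(s_i))\setminus X\neq\emptyset$, so a valid choice of $(s_i,v_i)$ exists; doing this for each intermediate level yields the desired monotone path inside $\mathbf{G}-X$.

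The only step carrying any content is the first one — establishing that monotonicity forces the path to shadow the tree-path $s_0,\dots,s_\ell$ level by level, using exactly one vertex per level. Once that is in hand, the hypotheses $|X\cap\mathbf{G}[T^{k}]|\leq n$ and $|V(K(s_i))|=k+1$ close the argument with room to spare, so I expect no genuine obstacle beyond getting this structural observation right.
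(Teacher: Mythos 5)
Your proof is correct and follows essentially the same approach as the paper's (one-line) proof: observe that $K(s)-X$ is non-empty at every level because $|V(K(s))|=k+1>n\geq|X\cap\mathbf{G}[T^k]|$ for $k\geq n$ (and $X$ misses levels below $n$ entirely), then walk down the tree path picking one surviving vertex per copy, using that consecutive copies are completely joined. Your additional structural analysis of what a monotone path must look like is fine but not needed for the existence claim, as you yourself note.
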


\begin{proof}
The assumptions on $X$ implies that $K(t) -X$ is non-empty for all $t \in T$, and so for $tt' \in E(T)$, any vertex of $K(t)-X$ is connected to any vertex in $K(t')-X$. 
\end{proof}

Let $b \colon \mathbf{G} \to T$ be the contraction map that collapses each $K(t)$ back to $t$. 
As $b$ has finite/compact fibres, i.e.\ $b$ is a continuous, perfect mapping in topological terms, it is well-known that $b$ extends to a continuous map between the Freudenthal boundaries $\Omega(G)$ and $\Omega(T)$. In the present case, this extension is actually a homeomorphism, as we will now show.

\begin{prop}
The endspaces $\Omega(\mathbf{G})$ and $\Omega(T)$ are naturally homeomorphic in the sense that the contraction map $b \colon \mathbf{G} \to T$ extends to a continuous surjection $\tilde{b} \colon |\mathbf{G}| \to |T|$ inducing a homeomorphism $ \Omega(\mathbf{G}) \to \Omega(G)$.
\end{prop}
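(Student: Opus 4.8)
The plan is to construct the map $\tilde b$ explicitly on ends, verify continuity using the basic open neighbourhoods of ends, and then establish that the induced map on endspaces is a continuous bijection between compact Hausdorff spaces, hence a homeomorphism.

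First I would define $\tilde b$ on $\Omega(\mathbf G)$. Given an end $\omega$ of $\mathbf G$ with $\omega$-ray $R$, its image $b(R)$ is a walk in $T$ that is eventually contained in $\lfloor t\rfloor$ for every finite truncation, so it determines a unique end of $T$; alternatively, for every $n$ the ray $R$ eventually lives in a single component of $\mathbf G - \mathbf G[T^{\le n}]$, which is of the form $\blowup T_{\lfloor t\rfloor}$ for a unique $t \in T^{n+1}$, and the sequence of these $t$'s is a normal ray of $T$ defining $\tilde b(\omega)$. I would check this is well-defined (independent of the choice of $R \in \omega$): two equivalent rays in $\mathbf G$ cannot be separated by finitely many vertices, and since $b$ has finite fibres the images are not separable by finitely many vertices in $T$ either, so they define the same end. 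Continuity of $\tilde b$ at ends then follows because the preimage of a basic neighbourhood $\hat C(T^{<n}, \eta)$ in $|T|$ contains the basic neighbourhood $\hat C(\mathbf G[T^{<n}], \omega)$ in $|\mathbf G|$ for any $\omega$ mapping into it, since $b$ maps the component $\blowup T_{\lfloor t\rfloor}$ onto $\lfloor t\rfloor$. On the vertex/edge parts of $|\mathbf G|$ the map $b$ is continuous by construction of the contraction map, so $\tilde b$ is continuous on all of $|\mathbf G|$; being continuous with compact domain and Hausdorff codomain, it is a continuous surjection onto $|T|$ (surjectivity on ends is clear since every normal ray of $T$ lifts to a ray in $\mathbf G$ via the complete joins between successive $K(t)$'s).

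Next I would show $\tilde b \restriction \Omega(\mathbf G) \colon \Omega(\mathbf G) \to \Omega(T)$ is injective. Suppose $\omega \neq \omega'$ are distinct ends of $\mathbf G$. Then there is a finite vertex set $X \subseteq V(\mathbf G)$ with $C(X,\omega) \neq C(X,\omega')$; enlarging $X$ we may assume $X = \mathbf G[T^{<n}]$ for some $n$, since these sets are cofinal among finite separators (every finite set of vertices is contained in finitely many levels). But the components of $\mathbf G - \mathbf G[T^{<n}]$ are exactly the sets $\blowup T_{\lfloor t\rfloor}$ for $t \in T^n$, which are in bijection with the components $\lfloor t\rfloor$ of $T - T^{<n}$ via $b$. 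Hence $\omega,\omega'$ living in different components of $\mathbf G - \mathbf G[T^{<n}]$ forces $\tilde b(\omega)$ and $\tilde b(\omega')$ to live in different components of $T - T^{<n}$, so $\tilde b(\omega) \neq \tilde b(\omega')$. Combined with surjectivity on ends, $\tilde b \restriction \Omega(\mathbf G)$ is a continuous bijection between compact Hausdorff spaces (both endspaces are closed subspaces of the respective compact spaces $|\mathbf G|$, $|T|$), hence a homeomorphism.

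I do not expect any serious obstacle here; the argument is essentially bookkeeping about the correspondence between components of $\mathbf G - \mathbf G[T^{<n}]$ and components of $T - T^{<n}$ induced by the finite-fibred contraction $b$. The one point requiring a little care is the cofinality claim — that the separators $\mathbf G[T^{<n}]$ are cofinal among all finite vertex separators of $\mathbf G$ — which holds because $T$ is a rooted tree in which every vertex has finitely many children of bounded "width" $K(t)$, so any finite set of vertices of $\mathbf G$ meets only finitely many levels and is therefore contained in some $\mathbf G[T^{<n}]$; this is exactly what makes the end structure of $\mathbf G$ agree with that of $T$ rather than being strictly finer. If one prefers, Lemma~\ref{lem_connectivity} can be invoked to see directly that $\blowup T_{\lfloor t\rfloor} - \mathbf G[T^{<n}]$ is connected, which streamlines the identification of components.
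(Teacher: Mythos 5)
Your proof is correct, and it rests on exactly the same combinatorial fact as the paper's: for every $n$, the components of $\mathbf{G}-\mathbf{G}[T^{<n}]$ are the sets $\blowup{T}_{\lfloor t\rfloor}$ for $t\in T^n$ and correspond bijectively, via $b$, to the components $\lfloor t\rfloor$ of $T-T^{<n}$. The difference is one of packaging. The paper feeds this fact into the inverse limit representation of Theorem~\ref{thm_limits}: it writes $|T|=\varprojlim T_n$ and $|\mathbf{G}|=\varprojlim \mathbf{G}_n$ for the contractions along $S_n=T^{<n}$ and $S_n'=b^{-1}(S_n)$, observes that the induced maps $b_n\colon \mathbf{G}_n\to T_n$ are bijective on dummy vertices, and lets the functoriality of inverse limits deliver continuity and surjectivity for free; injectivity on ends is then immediate from the dummy-vertex bijection. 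You instead define $\tilde b$ directly on rays and verify continuity by hand against the basic neighbourhoods $\hat C_*(X,\eta)$, which costs you the extra bookkeeping (well-definedness on equivalence classes of rays, the cofinality of the separators $\mathbf{G}[T^{<n}]$ among finite vertex sets, the edge-segment parts of the basic open sets, which you only gesture at). Both routes are sound; the inverse limit formulation is shorter here and is reused verbatim in Theorems~\ref{thm_universal1} and~\ref{thm_universalgraphlike}, whereas your direct argument has the mild advantage of not presupposing Theorem~\ref{thm_limits}. Your closing remark that Lemma~\ref{lem_connectivity} (with $X=\emptyset$) certifies connectedness of each $\blowup{T}_{\lfloor t\rfloor}$ is a nice touch that the paper leaves implicit.
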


\begin{proof}
First, let us represent $|T|$ and $|\mathbf{G}|$ as inverse limits. For this, consider the sequences $S_n = T^{<n} \subset V(T)$ and $S'_n = b^{-1}(S_n) \subset V(\mathbf{G})$, and consider the contractions $T_n$ and $\mathbf{G}_n$ obtained by contracting all components of $T-S_n$ and $\mathbf{G} - S'_n$ respectively. By Theorem~\ref{thm_limits} we have 
$$|T| = \varprojlim T_n \quad \text{and} \quad |\mathbf{G}| = \varprojlim \mathbf{G}_n.$$
Moreover, the continuous surjection $b \colon \mathbf{G} \to T$ naturally restricts to a surjection
$b_n \colon \mathbf{G}_n \to T_n$ 
which by choice of $S'_n = b^{-1}(S_n)$ induces a bijection between the dummy vertices of $\mathbf{G}_n$ and $T_n$ $(\star)$.
We claim that
$$ \tilde{b} \colon \varprojlim \mathbf{G}_n \to \varprojlim T_n, \; (x_n)_{n \in \NN} \to (b_n(x_n))_{n \in \NN}$$
is as desired. Clearly, it is a well-defined continuous map. As all $b_n$ are surjections, so is $\tilde{b}$ by compactness. It remains to argue that $\tilde{b}$ sends ends of $\mathbf{G}$ injectively to ends of $T$. 
So consider two distinct ends $\omega \neq \omega' \in \Omega(\mathbf{G})$. 
In $\varprojlim \mathbf{G}_n = |\mathbf{G}| $, the ends $\omega$ and $\omega'$ are represented by sequences of dummy vertices $(x_n)_{n \in \NN}$ and $(x'_n)_{n \in \NN}$ respectively. Since $\omega \neq \omega'$ we have $x_n \neq x'_n$ for some $n \in \NN$. But then
$b_n(x_n) \neq b_n(x'_n)$ by $(\star)$, so $\tilde{b}$ is injective.
\end{proof}

\subsection{A topologically universal $|G|$}

In this section we show that our desired topologically universal end compactification of a locally finite graph is given by the blowup $\mathbf{G}$ of the full binary tree.
From now on we will denote the full binary tree with $T$.

\begin{thm}
\label{thm_universal1}
Every locally finite connected graph $G$ can be embedded into $\mathbf{G}$ such that the embedding $\phi \colon G \to \mathbf{G}$ lifts to an embedding $\tilde{\phi} \colon |G| \to |\mathbf{G}|$.
\end{thm}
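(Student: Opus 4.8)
The plan is to upgrade the warm-up argument of Proposition~\ref{prop_warum-up} from the ``path of complete graphs'' to the blow-up $\mathbf{G}$ of the full binary tree $T$, exploiting that $\mathbf{G}$ now has both the required connectivity in its upper levels \emph{and} a rich end structure (a Cantor set of ends, by the previous proposition, and in fact $\Omega(T)$ contains topologically every closed subset of $2^{\NN}$ up to homeomorphism, since $T$ is the full binary tree). Fix a locally finite connected graph $G$ with a BFS-enumeration of $V(G)$ into distance classes $D^0=\{v\}, D^1, D^2,\dots$ from a chosen root $v$. The first step is to use the fact that $|G| = \varprojlim G_n$ (Theorem~\ref{thm_limits}) with $S_n = D^{\leq n}$: the ends of $G$ are coded by the infinite branches of the tree $\mathcal{T}$ of nonempty components of $G - D^{\leq n}$, which is a locally finite rooted tree, hence admits an order- and end-preserving embedding into the full binary tree $T$ (subdivide high-degree nodes; a finitely-branching tree embeds topologically into the binary tree so that rays go to rays). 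This choice of a target sub-tree $T_0 \subseteq T$ will govern where the different ``branches'' of $G$ get sent.

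Next, I would build the embedding $\phi\colon G \to \mathbf{G}$ recursively, in lockstep with a strictly increasing level function $h\colon \NN \to \NN$, maintaining embeddings $\phi_n\colon D^{\leq n} \to \mathbf{G}$ that (1) extend one another, (2) send $D^n$ injectively into $\mathbf{G}[T^{h(n)}]$, spread across the blow-ups $K(t)$ of the nodes $t$ lying in the correct part of $T_0$ according to which component of $G - D^{\leq n}$ the vertices of $D^{n+1}$ below them will belong to, and (3) route each edge of $E(D^n, D^{n+1})$ along a \emph{monotone} path in $\mathbf{G}[T^{\geq h(n)}]$ internally disjoint from the image of $\phi_n$ and from the other routing paths. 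The key tool for (3) is Lemma~\ref{lem_connectivity}: choosing $h$ to grow fast enough (e.g.\ $h(n+1) \geq h(n) + |E(D^{\leq n+1})|$ or similar), the already-used vertices meet each level $\mathbf{G}[T^k]$ in at most $h(n)$ points, so monotone connecting paths exist and can be chosen pairwise internally disjoint one after another, each extended by short paths at its two ends to hit $\phi_n(x)$ and $\phi_{n+1}(y)$. Crucially, I must route edge $e = xy$ with $x \in D^n, y\in D^{n+1}$ through blow-ups of nodes of $T_0$ that lie \emph{in the direction of the branch of $\mathcal{T}$ that $y$'s component determines}, so that rays of $G$ running off to an end $\omega$ are mapped to rays of $\mathbf{G}$ running off to the corresponding end of $T_0 \subseteq T$.

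Finally, for the lift: by construction $\phi$ sends every ray $R$ in $G$ to a ray in $\mathbf{G}$ that is eventually monotone and eventually stays in $\mathbf{G}[\lfloor t \rfloor]$ for nodes $t$ tracing out a fixed normal ray of $T_0$ determined by the end $\mathrm{end}(R)$ of $G$; hence $\phi$ induces a well-defined, injective map $\Omega(G) \to \Omega(\mathbf{G})$, and I would verify it is a topological embedding on $|G|$ by checking it is continuous (a basic neighbourhood $\hat{C}_*(D^{\leq n},\omega)$ maps into a basic neighbourhood of the image end, since $\phi$ respects the component/branch structure at every stage) and that $|G|$ is compact so a continuous injection is an embedding — equivalently, that the map is closed, which follows because distinct ends of $G$ separate at some finite stage $n$ and are then sent into blow-ups of disjoint up-closures in $T_0$. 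The main obstacle I expect is bookkeeping in step two: simultaneously keeping the routing paths internally disjoint, keeping each distance class inside a single level, \emph{and} making the assignment of $G$-branches to $T_0$-subtrees coherent as $n$ increases (so that a vertex of $D^{n+1}$ that will later sprout an end consistent with a branch $\beta$ is placed under the node of $T_0$ that $\beta$ is routed through) — this requires fixing, before the recursion, the tree embedding $\mathcal{T} \hookrightarrow T_0 \subseteq T$ and then being disciplined about always choosing target blow-ups compatible with it. The connectivity supplied by Lemma~\ref{lem_connectivity} makes the path-routing itself routine once $h$ grows fast enough; the end-preservation is what the binary-tree blow-up was designed for, so it should go through cleanly.
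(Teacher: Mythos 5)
Your proposal is correct and follows essentially the same route as the paper: a distance-class recursion with a fast-growing level function $h$, monotone routing paths supplied by Lemma~\ref{lem_connectivity}, and end-injectivity obtained by sending distinct components of $G - D^{\leq n}$ into disjoint up-closures $\blowup{T}_{\lfloor t \rfloor}$. The only (harmless) organizational difference is that you pre-fix an embedding of the component tree of $G$ into $T$ to guide the placement, whereas the paper simply assigns the pieces of $\cC_{n+1}$ greedily to distinct descendants of the node hosting $H^-$ during the recursion, and then obtains the lift $\tilde{\phi}$ via the inverse-limit description of $|\mathbf{G}|$ from Theorem~\ref{thm_limits} rather than by checking continuity on basic end-neighbourhoods directly.
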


\begin{proof}[Proof of Theorem~\ref{thm_universal1}]
Let $G=(V,E)$ be a locally finite connected graph and $v \in V$ be an arbitrary vertex.
We denote by $D^n$ the distance classes of $v$ in $G$, and by $D^{\leq n}$ we denote the union over the first $n$ distance classes. Write
$$\cC_{n+1}:=\Set{ C \cap D^{n+1} \colon C 
\text{ a component of } G-D^{\leq n}}.$$
Any subgraph $H \in \cC_{n+1}$ has neighbours in precisely one subgraph from $\cC_n$, which we denote with $H^-$.
First, we define a strictly increasing function $h \colon \NN \to \NN$ which determines at which level of $\mathbf{G}$ each $D^n$ will be embedded:
\[
    h(n) :=
     \begin{cases}
        \max\Set{h(n-1)+|D^n|, |E(D^n, D^{n+1})|} & \text{if } n > 0, \\
        d(v) & \text{if } n = 0.
    \end{cases}
\]

We build the desired topological embedding $G \hookrightarrow \mathbf{G}$ by recursively constructing a sequence of topological embeddings 
$$\phi_n \colon D^{\leq n} \to \mathbf{G}\left[T^{\leq h(n)}\right]$$ for $n \in \NN$ such that
\begin{enumerate}
    \item \label{univ1_extend} if $n \geq 1$, then $\phi_{n}$ extends $\phi_{n-1}$, i.e. $\phi_{n}\upharpoonright D^{\leq n-1} = \phi_{n-1}$, and 
    \item \label{univ1_prop_subgraph} the subgraphs $H \in \cC_{n}$ embed into pairwise distinct $K(t)$ for $t \in T^{h(n)}$, and 
     \item \label{univ1_prop_phi_cross_edges} if $n \geq 1$, then the subgraph $G[D^{n-1} \cup D^{n}]$ embeds under $\phi_n$ into $\displaystyle \mathbf{G}\left[\bigcup^{h(n)}_{k=h(n-1)}T^k\right]$.
\end{enumerate}
Embed $v$ into an arbitrary $K(t)$ with $t \in T^{h(0)}$. 
Clearly, all properties are satisfied. 

Now suppose that $\phi_n$ has been constructed for some $n \in \NN$.
To extend $\phi_n$ to $\phi_{n+1}$, we start by embedding each subgraph $H \in \cC_{n+1}$.
By property (\ref{univ1_prop_subgraph}) of $\phi_n$, every subgraph $H^- \in \cC_{n}$ of any $H \in \cC_{n+1}$ gets embedded by $\phi_n$ into a  $K(t)$ for distinct $t \in T^{h(n)}$.
Every blowup vertex of $T^{h(n+1)}$ has at least an order of $h(n+1) \geq |D^{n+1}|\geq |H|$.
Hence, we can embed each $H$ into any blowup vertex $K(s)$ with $s \in T^{h(n+1)} \cap T_{\lfloor t \rfloor}$.
Since $h(n+1) \geq h(n) + |D^{n+1}|$, we can choose the $K(s)$ such that they are pairwise distinct and thus property (\ref{univ1_prop_subgraph}) is satisfied.
It remains to map the edges $E(D^n, D^{n+1})$ to their corresponding paths. 
Enumerate 
$$E(D^n,D^{n+1})= \{x_iy_i \colon 1 \leq i \leq k(n) \}$$
and note that $k(n) \leq h(n)$. We recursively choose monotone, pairwise independent $\phi_{n+1}(x_i)-\phi_{n+1}(y_i)$ paths $P_i$ as the image of the edge $x_iy_i$ under $\phi_{n+1}$: If $1 \leq j < k(n)$ is such that paths $P_i$ for $i<j$ have already been defined, consider $X_j = \{ \mathring{P}_i \colon i < j\}$, the union over all inner vertices from all previous $P_i$. Then $X_j \subseteq \mathbf{G}[T^{\geq h(n)+1}]$ satisfies the assumptions of Lemma~\ref{lem_connectivity}, and any monotone $\phi_{n+1}(x_j)-\phi_{n+1}(y_j)$ path $P_j$ in $\mathbf{G} - X_j$ is as desired.
Since all paths $P_i$ are monotone, it is clear that $\phi_{n+1}$ satisfies property (\ref{univ1_prop_phi_cross_edges}) for $\phi_{n+1}$.
Furthermore, by property (\ref{univ1_prop_phi_cross_edges}) of $\phi_n$ these paths intersect the image of $\phi_n$ only with their endpoints in $H'$, and so $\phi_{n+1}$ is indeed a topological embedding.

By (\ref{univ1_extend}), the union of the $\phi_n$ is a topological embedding $\phi \colon G \hookrightarrow \mathbf{G}$.
We claim that properties (\ref{univ1_prop_subgraph})  and (\ref{univ1_prop_phi_cross_edges}) yield the following property of $\phi$ for every $n \in \NN$:
\begin{enumerate}
\item[$(\ast)$]\label{prop_comp_subset} Distinct components of $ G-D^{<n}$ map under $\phi$ into  distinct components of $\mathbf{G}-\mathbf{G}[T^{<h(n)}]$.
\end{enumerate}
Indeed, let $C$ be a component of $G-D^{<n}$.
Property (\ref{univ1_prop_phi_cross_edges}) implies that $C$ gets mapped under $\phi$ into a connected component of 
$$\displaystyle \mathbf{G}\left[\bigcup_{k \geq h(n)}T^k\right] = \mathbf{G}-\mathbf{G}[T^{<h(n)}],$$
and any such component is of the form $\blowup{T}_{\lfloor t_C \rfloor }$ for some $t_C \in T^{h(n)}$.
Furthermore, property (\ref{univ1_prop_subgraph}) ensures that for any two distinct components $C \neq C'$ we have $t_C \neq t_{C'}$, establishing $(\ast)$.

We now show that every homeomorphic embedding $\phi \colon G \to \mathbf{G}$ with property $(\ast)$ extends to an embedding $\tilde{\phi} \colon |G| \hookrightarrow \mathbf{G}$.
To obtain $\tilde{\phi}$, we represent $|\mathbf{G}| = \varprojlim \mathbf{G}_n$ as an inverse limit by considering contractions $\mathbf{G}_n$ obtained by contracting all components of $\mathbf{G}-\mathbf{G}[T^{\leq h(n)}]$ (Theorem~\ref{thm_limits}).  Write $\pi_n \colon \mathbf{G} \to \mathbf{G}_n$ for the corresponding contraction maps.
Now every map $\pi_n \circ \phi \colon G \to \mathbf{G}_n$ extends to a continuous map $\tilde{\phi}_n \colon |G| \to \mathbf{G}_n$ by sending each end $\omega$ of $G$ to the dummy vertex of $\mathbf{G}_n$ containing $\phi(C(D^{<n}, \omega))$.
This yields a continuous map
\[
    \tilde{\phi} \colon |G| \to \varprojlim \mathbf{G}_n, x \mapsto (\tilde{\phi}_n(x))_{n \in \NN}.
\]
Finally, to see that $\tilde{\phi}$ is injective, 
we consider two distinct ends $\omega \neq \omega'$ of $G$. 
There exists some $n \in \NN$ such that $C(D^{<n}, \omega) \neq C(D^{<n}, \omega')$.
These are components of $G-D^{<n}$ and thus by property $(\ast)$ we have $\tilde{\phi}_n(\omega) \neq \tilde{\phi}_n(\omega')$.
\end{proof}

\section{Universality for graph-like continua}
\subsection{Graph-like continua and inverse limits}

Taking our cue from \cite{thomassenvella} we call a compact metrizable space $\textit{graph-like}$ if there is a closed zero-dimensional subset $V \subseteq X$ and  a discrete index set $E$ such that $X \setminus V \cong E \times (0,1)$. The points in $V$ are the \emph{vertices} and the elements of $E$ the edges of $X$. Combinatorially, if we have one fixed set $V$ in mind, we also write $(X,V,E)$ as a triple. Note that every finite graph and every Freudenthal compactification of a locally finite connected graph is a graph-like space (where the vertices plus the ends take the role of the closed, zero-dimensional subset). Further examples are given by the Hawaiian earring and a version of Sierpiński's triangle, where one iteratively removes open hexagons instead of open triangles, see Figure~\ref{fig_hawplusSier}.
For further examples and background information on graph-like continua, see \cite{espinoza2020,GartsidePitzgraphlike,Eulerspaces}.

\begin{figure}[h]
     \centering
     \begin{subfigure}[b]{0.4\textwidth}
        \centering
        \begin{tikzpicture}
            \draw[thick] (0,0) arc [start angle=-90, end angle=270, radius=2];
            \draw[thick] (0,0) arc [start angle=-90, end angle=270, radius=1.5];
            \draw[thick] (0,0) arc [start angle=-90, end angle=270, radius=1.1];
            \draw[thick] (0,0) arc [start angle=-90, end angle=270, radius=0.75];
            \draw[thick] (0,0) arc [start angle=-90, end angle=270, radius=0.5];
            \draw[thick] (0,0) arc [start angle=-90, end angle=270, radius=0.35];
            
            \draw[dotted,red,very thick] (0,0) -- (0,0.5);
            
            \node at (0,0)  [circle,fill=black, inner sep=1.5pt] {} ;
        \end{tikzpicture}

        \caption{Hawaiian earring}
     \end{subfigure}
     \begin{subfigure}[b]{0.5\textwidth}
         \centering
    \includegraphics[width=0.8\textwidth]{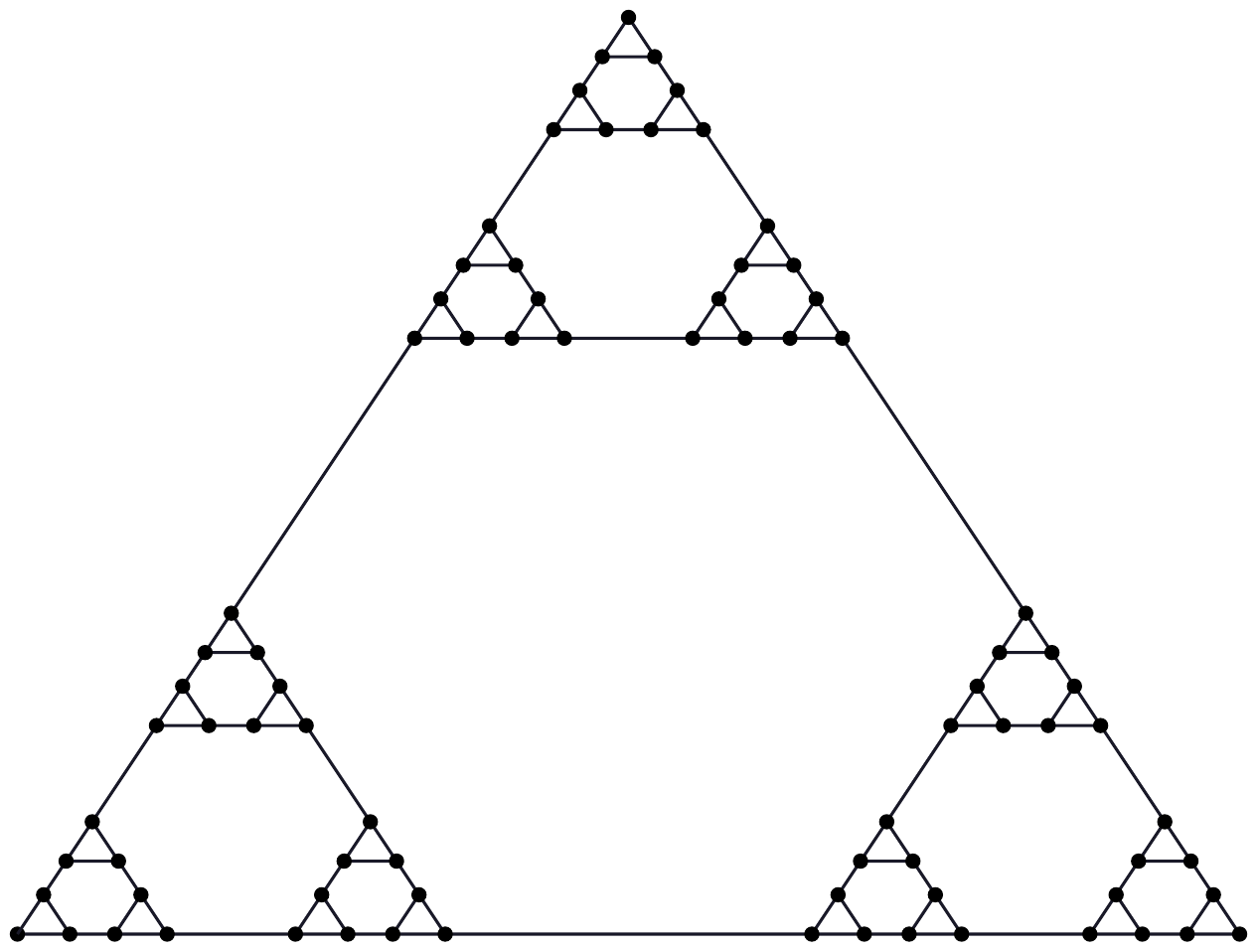}
         \caption{graph-like Sierpiński triangle}
     \end{subfigure}
     \caption{Two examples of graph-like continua.}
     \label{fig_hawplusSier}
\end{figure}

 We often identify the label, $e$, of an edge, with the (open) subspace $e\times (0,1)$ of $X$.
Since $V$ is zero-dimensional, for every edge $e$, the closure, $\overline{e}$, of $e$ adds at most two vertices -- the ends of the edge -- and $\overline{e}$ is a simple closed curve or an arc. Since compact metrizable spaces are separable, every graph-like continuum has at most countably many edges.

Every graph-like continuum is locally connected, and hence a Peano continuum \cite[Corollary 8]{espinoza2020}. The class of graph-like continua has been studied under a number of different names.
In continuum theory they are known as completely regular continua: Recall that a continuum $X$ is called \textit{completely regular continuum} if each non-degenerate subcontinuum of $X$ has non-empty interior in $X$. That this condition is in fact equivalent to being a graph-like continuum has been proven already in \cite{kra}.
Unaware of the notions of graph-like continua and completely regular continua, Abobaker and Charatonik recently introduced in \cite{ABOBAKER2021107408} the equivalent notion of a \emph{thin continuum}, and posed the question about the existence of a universal thin continuum. For the equivalences, also see \cite[Theorem A]{espinoza2020}.
Our universality theorem for graph-like continua relies on the following inverse limit representation theorem.

\begin{thm}
\label{thm_invlimit}
Every graph-like continuum $X$ can be represented as inverse limit of finite connected multigraphs $\set{G_n}:{n \in \NN}$ where the bonding maps $f_n \colon G_{n+1} \to G_n$ correspond to the contraction of one single edge, i.e.\ we have $G_{n+1} / \overline{e}_{n+1} = G_n$.
\end{thm}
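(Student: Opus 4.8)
The plan is to start from any inverse limit representation of $X$ by finite connected multigraphs with onto bonding maps --- this exists by the standard theory of graph-like continua (e.g.\ the representation theorem in \cite{espinoza2020,thomassenvella}, or one may obtain it by exhausting the zero-dimensional vertex set $V$ by finite clopen pieces and contracting complementary pieces, exactly as in the proof of Theorem~\ref{thm_limits}). So assume $X = \varprojlim (H_m, g_m)$ with each $H_m$ a finite connected multigraph and each $g_m \colon H_{m+1} \to H_m$ a surjection. The point of the theorem is to \emph{refine} this tower so that each bonding map becomes a contraction of a single edge; since a telescoped composition of single-edge contractions is again a single-edge contraction only after finitely many steps, the real task is to factor each $g_m$ as a finite composition of single-edge contractions and then re-index.

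The key step is therefore: every surjection $g \colon H' \to H$ between finite connected multigraphs that arises as a bonding map of a graph-like inverse limit factors as $H' = H_0' \to H_1' \to \cdots \to H_r' = H$ where each arrow contracts exactly one edge. First I would record which maps can possibly occur: because $\varprojlim H_m$ is graph-like (in particular, the preimage of each vertex is zero-dimensional, hence a single point in a finite graph, so $g$ is injective on vertices... no --- rather, $g$ must map vertices to vertices and edges either to edges homeomorphically or collapse them to vertices), the bonding map $g$ is, up to homeomorphism of the $1$-complexes, a graph contraction: there is a set $F \subseteq E(H')$ of edges whose contraction (deleting resulting loops, keeping parallel edges) yields $H$, together with possibly an identification of the remaining structure that is forced to be trivial by connectivity and the graph-like hypothesis. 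Granting that $g$ is a contraction along an edge set $F$, I contract the edges of $F$ one at a time: choose any $e \in F$, form $H'/\overline{e}$, observe the image of $F \setminus \{e\}$ is still an edge set whose contraction gives $H$, and iterate. This produces the required finite factorization $H' \to H'/\overline{e} \to \cdots \to H$ with $|F|$ single-edge contractions.

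Having factored every $g_m$ into finitely many single-edge contractions, I interleave all these intermediate finite graphs into one long tower $G_0 \leftarrow G_1 \leftarrow G_2 \leftarrow \cdots$ in which every bonding map is a single-edge contraction $G_{n+1}/\overline{e}_{n+1} = G_n$. It remains to check $\varprojlim G_n \cong X$: the new tower is cofinal in (indeed a refinement of) the old one, with compatible projections, so the inverse limits are canonically homeomorphic --- this is the standard fact that passing to a cofinal subsequence, or inserting intermediate spaces with compatible maps, does not change the inverse limit (cf.\ \cite[2.22 and surrounding]{Nadler}). One subtlety to handle: if some $g_m$ is already a homeomorphism (i.e.\ $F = \varnothing$), or if after finitely many steps the tower stabilizes, then $X$ is a finite graph; in that case the statement is satisfied vacuously or by padding with identity maps, which are degenerate single-edge contractions only if we allow $\overline{e}$ to be a loop --- a case I would simply note separately, since a finite graph is trivially ``its own'' inverse limit.

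The main obstacle I expect is the justification that each bonding map $g_m$ is, up to homeomorphism, an honest graph contraction along an edge set --- i.e.\ that no ``wilder'' surjection between finite $1$-complexes can occur as a bonding map of a graph-like inverse limit. This is where the graph-like hypothesis on $X$ must be used essentially: one needs that the vertex sets can be chosen so that $g_m$ sends $V(H_{m+1})$ onto $V(H_m)$ and is a homeomorphism off the preimage of $V(H_m)$ except on a union of edges collapsed to vertices. I would derive this by choosing the defining vertex sets of the $H_m$ coherently (pulling back $V(H_m)$ under the bonding maps, as in the proof of the Proposition on $\Omega(\mathbf{G}) \cong \Omega(T)$ above, via $S_n' = b^{-1}(S_n)$), so that the combinatorial structure of the maps is controlled from the outset; once the maps are genuine contractions, the one-edge-at-a-time argument is routine.
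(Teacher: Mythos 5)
Your route is viable but genuinely different from the paper's. The paper does not start from a pre-existing inverse limit representation at all: it enumerates the edge set $E=\{e_1,e_2,\dots\}$ of $X$ directly and defines $G_n$ as the quotient of $X$ obtained by contracting each connected component of $X\setminus\{e_1,\dots,e_n\}$ to a point. The single-edge-contraction property $G_{n+1}/\overline{e}_{n+1}=G_n$ is then automatic from the construction, finiteness of each $G_n$ follows from the boundary bumping theorem (deleting one edge leaves at most two components, so $G_n$ has at most $n+1$ vertices), and $X\cong\varprojlim G_n$ is a continuous compact-to-Hausdorff bijection, with injectivity coming from the fact that any two points of a graph-like continuum are separated by deleting finitely many edges. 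This buys complete control over the bonding maps from the outset and makes the proof self-contained; it entirely sidesteps what you correctly identify as the main obstacle in your approach, namely certifying that an arbitrary surjective bonding map between finite $1$-complexes is an honest edge contraction. Your approach, by contrast, outsources existence to the known representation theorem and then refines: this works, but to close your acknowledged gap cleanly you should not start from an arbitrary surjective tower --- start from the representation of \cite[Theorem~14]{espinoza2020}, in which the bonding maps are already contractions of finite edge sets $F_m$; then your one-edge-at-a-time factorization and the cofinal-refinement argument for the inverse limit are both routine and correct. Your observation about the degenerate case (finitely many edges, identity bonding maps) applies equally to the paper's construction and is harmless. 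The trade-off: your argument is shorter given the literature, the paper's is elementary and simultaneously reproves the representation theorem it refines.
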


\begin{proof}
Let $(X,V,E)$ be a graph-like continuum with vertex set $V$ and edge set $E$. Enumerate $E = \{e_1,e_2,e_3,\ldots\}$ and for $n=0,1,2,\ldots$ let $G_n$ be the quotient of $X$ given by contracting each connected component of $X-\{e_1,\ldots,e_n\}$ to a point, and let $\pi_n \colon X \to G_n$ be the (continuous) quotient map. 
Since deleting an edge from a graph-like continuum leaves at most two connected components (which can be seen for example by invoking the \emph{boundary bumping theorem} \cite[5.6]{Nadler}), every $G_n$ is a finite multigraph with at most $n+1$ vertices and edge set $\{e_1,\ldots,e_n\}$. As the quotient of a connected space, every $G_n$ is connected, too. See Figure~\ref{fig_invlimit} for an example in the case of the graph-like Sierpiński triangle.

\begin{figure}[h]
    \centering
    \includegraphics[width=\textwidth]{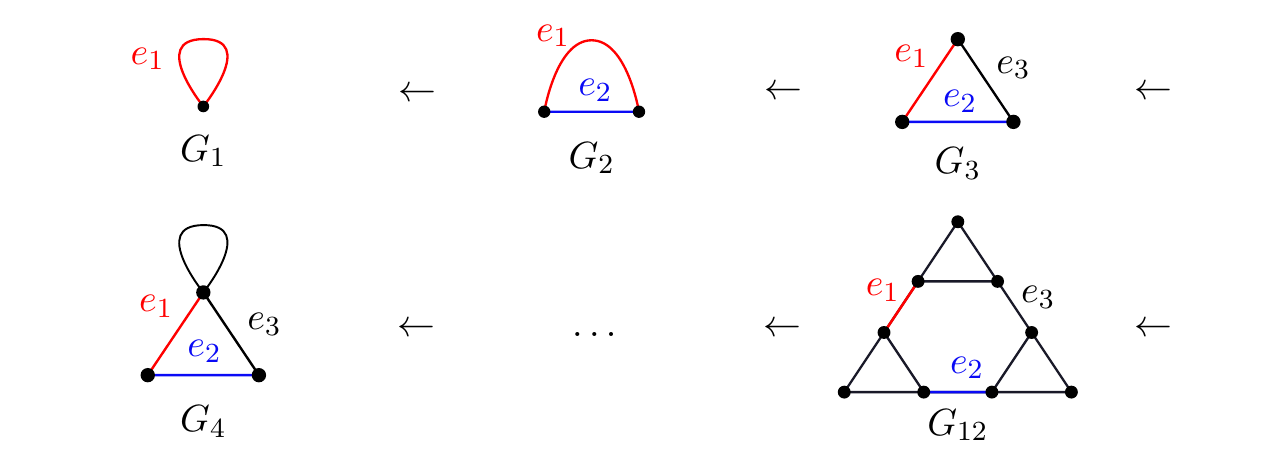}
    \caption{Representing the graph-like Sierpiński triangle as inverse limit with single edge contraction bonding maps.}
    \label{fig_invlimit}
\end{figure}

From the construction, it is evident that $G_{n+1} / \overline{e}_{n+1} = G_n$. Let $f_{n} \colon G_{n+1} \to G_n$ be the corresponding quotient map.
We claim that $X$ is represented by the inverse limit
$$\varprojlim \set{(G_n,f_n)}:{n \in \NN}.$$
To see this claim, consider the map 
$$f \colon X \to  \varprojlim G_n, \; x \mapsto (\pi_n(x))_{n \in \NN}.$$
 Since $X$ is compact and $\varprojlim G_n$ is Hausdorff, it suffices to show that $f$ is a continuous bijection. By the product topology, $f$ is a continuous map into the product $\prod_{n \in \NN} G_n$. Moreover, it is straightforward from the definition of $f_{n}$ to check that the image of $f$ is a subset of $\varprojlim G_n$. That $f$ is surjective follows from the fact that each $\pi_n$ is a continuous surjection and $X$ is compact, see e.g.~\cite[2.22]{Nadler}. Finally, to see that $f$ is injective, recall that any two vertices $x \neq y$ of $X$ are separated by deleting finitely many edges \cite[Lemma~1]{espinoza2020}, and so $f_n(x) \neq f_n(y)$ as soon as these finitely many edges are contained in $ \{e_1,\ldots,e_n\}$.
\end{proof}

We remark that the graph-like continua are precisely the inverse limits of finite connected graphs with edge-contraction bonding maps, see \cite[Theorem~A and Theorem~14]{espinoza2020}.

\subsection{A universal tree-blowup for all graph-like continua}

In this section we will show that every graph-like continuum embeds topologically into the end compactification of a blowup $\mathbf{G}$ of the binary tree; in other words, this $|\mathbf{G}|$ is not only universal for all other end compactifications of locally finite connected graphs, as shown in Theorem~\ref{thm_universal1}, but even for the larger class of graph-like continua.

So let $T$ be the full rooted binary tree. 
For technical reasons, our desired universal graph $\mathbf{G}$ will be obtained by replacing every node $t \in T^n$ with a copy $K(t)$ of a somewhat larger complete graph $K_{2n+1}$ and joining, once again, each $K(t)$ completely to all $K(t')$ with $tt' \in E(T)$, as in Figure~\ref{fig_2n+1}.
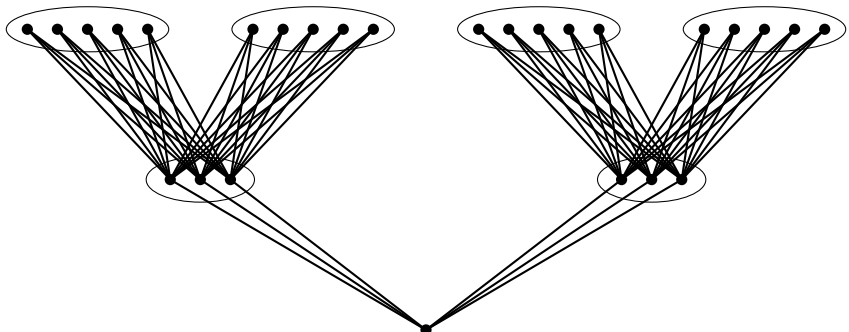
\begin{figure}[h]
    \centering
    \begin{tikzpicture}[scale=1]
     \foreach \point in {
	(3.0, 0),
	(5.6, 2),
	(6.0, 2),
	(6.3999999999999995, 2),
	(6.7, 4),
	(7.1000000000000005, 4),
	(7.500000000000001, 4),
	(7.8999999999999995, 4),
	(8.299999999999999, 4),
	(3.7, 4),
	(4.1000000000000005, 4),
	(4.5, 4),
	(4.9, 4),
	(5.3, 4),
	(-0.4, 2),
	(0.0, 2),
	(0.4, 2),
	(0.7, 4),
	(1.0999999999999999, 4),
	(1.4999999999999998, 4),
	(1.9000000000000001, 4),
	(2.3, 4),
	(-2.3, 4),
	(-1.9000000000000001, 4),
	(-1.5, 4),
	(-1.0999999999999999, 4),
	(-0.7, 4)
} {
            \def\this{vertex-\point}
            \node (\this) at \point  [circle,fill=black, inner sep=1.5pt] {} ;
        }
\draw (6.0, 2) ellipse (0.72cm and 0.3cm);
\draw (7.5, 4) ellipse (1.08cm and 0.3cm);
\draw (4.5, 4) ellipse (1.08cm and 0.3cm);
\draw (0.0, 2) ellipse (0.72cm and 0.3cm);
\draw (1.5, 4) ellipse (1.08cm and 0.3cm);
\draw (-1.5, 4) ellipse (1.08cm and 0.3cm);
\draw[thick] (3.0, 0)--(5.6, 2);
\draw[thick] (3.0, 0)--(6.0, 2);
\draw[thick] (3.0, 0)--(6.3999999999999995, 2);
\draw[thick] (5.6, 2)--(6.7, 4);
\draw[thick] (6.0, 2)--(6.7, 4);
\draw[thick] (6.3999999999999995, 2)--(6.7, 4);
\draw[thick] (5.6, 2)--(7.1000000000000005, 4);
\draw[thick] (6.0, 2)--(7.1000000000000005, 4);
\draw[thick] (6.3999999999999995, 2)--(7.1000000000000005, 4);
\draw[thick] (5.6, 2)--(7.500000000000001, 4);
\draw[thick] (6.0, 2)--(7.500000000000001, 4);
\draw[thick] (6.3999999999999995, 2)--(7.500000000000001, 4);
\draw[thick] (5.6, 2)--(7.8999999999999995, 4);
\draw[thick] (6.0, 2)--(7.8999999999999995, 4);
\draw[thick] (6.3999999999999995, 2)--(7.8999999999999995, 4);
\draw[thick] (5.6, 2)--(8.299999999999999, 4);
\draw[thick] (6.0, 2)--(8.299999999999999, 4);
\draw[thick] (6.3999999999999995, 2)--(8.299999999999999, 4);
\draw[thick] (5.6, 2)--(3.7, 4);
\draw[thick] (6.0, 2)--(3.7, 4);
\draw[thick] (6.3999999999999995, 2)--(3.7, 4);
\draw[thick] (5.6, 2)--(4.1000000000000005, 4);
\draw[thick] (6.0, 2)--(4.1000000000000005, 4);
\draw[thick] (6.3999999999999995, 2)--(4.1000000000000005, 4);
\draw[thick] (5.6, 2)--(4.5, 4);
\draw[thick] (6.0, 2)--(4.5, 4);
\draw[thick] (6.3999999999999995, 2)--(4.5, 4);
\draw[thick] (5.6, 2)--(4.9, 4);
\draw[thick] (6.0, 2)--(4.9, 4);
\draw[thick] (6.3999999999999995, 2)--(4.9, 4);
\draw[thick] (5.6, 2)--(5.3, 4);
\draw[thick] (6.0, 2)--(5.3, 4);
\draw[thick] (6.3999999999999995, 2)--(5.3, 4);
\draw[thick] (3.0, 0)--(-0.4, 2);
\draw[thick] (3.0, 0)--(0.0, 2);
\draw[thick] (3.0, 0)--(0.4, 2);
\draw[thick] (-0.4, 2)--(0.7, 4);
\draw[thick] (0.0, 2)--(0.7, 4);
\draw[thick] (0.4, 2)--(0.7, 4);
\draw[thick] (-0.4, 2)--(1.0999999999999999, 4);
\draw[thick] (0.0, 2)--(1.0999999999999999, 4);
\draw[thick] (0.4, 2)--(1.0999999999999999, 4);
\draw[thick] (-0.4, 2)--(1.4999999999999998, 4);
\draw[thick] (0.0, 2)--(1.4999999999999998, 4);
\draw[thick] (0.4, 2)--(1.4999999999999998, 4);
\draw[thick] (-0.4, 2)--(1.9000000000000001, 4);
\draw[thick] (0.0, 2)--(1.9000000000000001, 4);
\draw[thick] (0.4, 2)--(1.9000000000000001, 4);
\draw[thick] (-0.4, 2)--(2.3, 4);
\draw[thick] (0.0, 2)--(2.3, 4);
\draw[thick] (0.4, 2)--(2.3, 4);
\draw[thick] (-0.4, 2)--(-2.3, 4);
\draw[thick] (0.0, 2)--(-2.3, 4);
\draw[thick] (0.4, 2)--(-2.3, 4);
\draw[thick] (-0.4, 2)--(-1.9000000000000001, 4);
\draw[thick] (0.0, 2)--(-1.9000000000000001, 4);
\draw[thick] (0.4, 2)--(-1.9000000000000001, 4);
\draw[thick] (-0.4, 2)--(-1.5, 4);
\draw[thick] (0.0, 2)--(-1.5, 4);
\draw[thick] (0.4, 2)--(-1.5, 4);
\draw[thick] (-0.4, 2)--(-1.0999999999999999, 4);
\draw[thick] (0.0, 2)--(-1.0999999999999999, 4);
\draw[thick] (0.4, 2)--(-1.0999999999999999, 4);
\draw[thick] (-0.4, 2)--(-0.7, 4);
\draw[thick] (0.0, 2)--(-0.7, 4);
\draw[thick] (0.4, 2)--(-0.7, 4);
\end{tikzpicture}
    \vspace{-1cm}
    \caption{Tree Blowup. (Edges inside ellipses are omitted.) }
    \label{fig_2n+1}
\end{figure}

\begin{thm}
\label{thm_universalgraphlike}
The blowup $\mathbf{G}$ is universal for the class of graph-like continua.
\end{thm}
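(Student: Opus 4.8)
The plan is to combine the inverse-limit representation theorem for graph-like continua (Theorem~\ref{thm_invlimit}) with a controlled embedding of each finite approximating multigraph $G_n$ into a level of $\mathbf{G}$, carried out compatibly with the single-edge-contraction bonding maps. Concretely, given a graph-like continuum $X$, fix an enumeration $E = \{e_1, e_2, \ldots\}$ and the resulting inverse system $(G_n, f_n)$ with $G_{n+1}/\overline{e}_{n+1} = G_n$, where $G_n$ is a finite connected multigraph on at most $n+1$ vertices with edge set $\{e_1,\ldots,e_n\}$. I would recursively build topological embeddings $\psi_n \colon G_n \hookrightarrow \mathbf{G}$, each mapping the (finitely many) vertices of $G_n$ into pairwise distinct blowup-cliques $K(t)$ at one common level $T^{m(n)}$ of $\mathbf{G}$ with $m(n)$ strictly increasing, and mapping each edge $e_i$ of $G_n$ to a monotone path in $\mathbf{G}[T^{\geq m(n)}]$, all these edge-paths pairwise internally disjoint. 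The point of using $K_{2n+1}$ rather than $K_{n+1}$ at level $n$ is exactly to have enough room: a level $T^{m}$ must accommodate up to $|V(G_n)| \leq n+1$ clique-images together with the internal vertices of $\leq n$ monotone edge-paths passing through, so the connectivity guaranteed by Lemma~\ref{lem_connectivity} (with the larger cliques) suffices to route each new path around the previously chosen ones.

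The recursion step is the heart of the argument. Passing from $\psi_n$ to $\psi_{n+1}$: the multigraph $G_{n+1}$ is obtained from $G_n$ by ``un-contracting'' the edge $e_{n+1}$, i.e.\ $G_n$ arises by identifying the two endvertices of $e_{n+1}$ (or $e_{n+1}$ is a loop, the degenerate case where $G_{n+1} = G_n$ with an added loop, which we can handle trivially since a loop is a simple closed curve routed as a monotone-up-then-monotone-down cycle). In the generic case, one vertex $w$ of $G_n$ splits into two vertices $w', w''$ of $G_{n+1}$ joined by $e_{n+1}$, with the edges formerly at $w$ redistributed between $w'$ and $w''$. To extend the embedding I would: first push all existing clique-images and edge-paths from level $m(n)$ down to a much higher level $m(n+1)$ — this ``refinement'' uses that $\mathbf{G}[\lfloor t\rfloor]$ retains the full structure and that monotone paths extend monotonically downward — choosing $m(n+1)$ large enough that $2m(n+1)+1$ exceeds $|V(G_{n+1})| + (\text{number of edge-paths})$; then place $K(w'), K(w'')$ in two distinct descendants of the clique that hosted $K(w)$, connect them by a short monotone path realizing $e_{n+1}$, re-route the finitely many old edge-paths incident to $w$ so they now land in $K(w')$ or $K(w'')$ as dictated by $G_{n+1}$, and keep all other clique-images and paths essentially unchanged. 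Lemma~\ref{lem_connectivity} is invoked at each re-routing to avoid the bounded set of already-used internal vertices.

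Having produced the compatible family $(\psi_n)$, I would assemble the embedding of $X \cong \varprojlim G_n$ into $|\mathbf{G}| = \varprojlim \mathbf{G}_k$ (Theorem~\ref{thm_limits}, with $\mathbf{G}_k$ the contraction of $\mathbf{G}$ along components of $\mathbf{G} - \mathbf{G}[T^{\leq k}]$). The key structural observation, mirroring property $(\ast)$ in the proof of Theorem~\ref{thm_universal1}, is that because the vertices of $G_n$ go to distinct cliques at level $m(n)$ and each edge-path is monotone, distinct vertices of $G_n$ get separated by the contraction $\pi_{m(n)}$; so the composite maps $X \to G_n \xrightarrow{\psi_n} \mathbf{G} \to \mathbf{G}_{m(n)}$ extend continuously to $|\mathbf{G}|$-level approximants and cohere into a continuous map $\Psi \colon X \to |\mathbf{G}|$. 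Injectivity follows because any two distinct points of $X$ are separated already in some $G_n$ (using $G_n$ separates them as in Theorem~\ref{thm_invlimit}), hence their images are separated in $\mathbf{G}_{m(n)}$. Continuity plus injectivity of $\Psi$ between compact Hausdorff spaces gives that $\Psi$ is an embedding, and since $\mathbf{G}$ is the blowup of the binary tree, $|\mathbf{G}|$ is itself graph-like, so $\mathbf{G}$ (meaning $|\mathbf{G}|$) is universal for the class.

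The main obstacle I anticipate is the bookkeeping in the recursion step: one must simultaneously (i) choose the level jump $m(n) \to m(n+1)$ large enough for the counting to work, (ii) verify that ``pushing down'' old clique-images and monotone paths to the new level genuinely preserves a topological embedding and does not create unwanted intersections, and (iii) re-route precisely the edge-paths incident to the split vertex $w$ — and only those — while proving the re-routed paths stay internally disjoint from everything retained. The degenerate sub-cases (loops; $e_{n+1}$ having both endpoints already present so that no vertex actually splits, only an edge is added between two existing vertices) need separate but easy treatment. None of these steps is deep — the connectivity Lemma~\ref{lem_connectivity} is doing all the real work — but stating the invariant cleanly enough that the induction closes without circular dependence between the level function $m(n)$ and the number of paths is where care is required.
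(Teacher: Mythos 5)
Your overall strategy coincides with the paper's: represent $X$ via Theorem~\ref{thm_invlimit}, recursively embed the finite approximations $G_n$ into $\mathbf{G}$ with vertices in pairwise distinct cliques at increasing levels and edges as disjoint paths, compatibly with the single-edge-contraction bonding maps, and pass to $|\mathbf{G}|=\varprojlim \mathbf{G}_k$ via a commuting ladder. However, your central invariant is unsatisfiable as stated: an edge $e=xy$ of $G_n$ must be realised by a path between two \emph{distinct} cliques $K(t_x)$ and $K(t_y)$ at the common level $T^{m(n)}$, and these lie in different components of $\mathbf{G}[T^{\geq m(n)}]$ (the components of $\mathbf{G}-\mathbf{G}[T^{<m(n)}]$ are exactly the sets $\mathbf{G}[\lfloor t\rfloor]$ for $t\in T^{m(n)}$), so no connecting path exists there at all — let alone a monotone one, which could meet level $m(n)$ only once. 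The edge-paths are forced to ascend above level $m(n)$; the paper's resolution is that the path for $e_{n+1}$ is a ``tent'' $a\ell b$ whose apex $\ell$ sits in the parent clique $K(g_n(z))$ at the level where $e_{n+1}$ is first uncontracted, the clique size $2n+1$ guaranteeing a spare apex vertex since $\deg_{G_n}(z)\leq 2n$. Lemma~\ref{lem_connectivity} is in fact never needed in this proof.

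The second step that would fail is your plan to ``re-route'' the old edge-paths incident to the split vertex $w$ while pushing everything to a much deeper level $m(n+1)$. Re-routing destroys exactly what makes the limit work: one needs $p_n(e)\subsetneq p_{n+1}(e)$ so that each edge converges to a well-defined double ray and, more importantly, so that the square relating $G_{n+1}\to\mathbf{G}_{m(n+1)}$ and $G_n\to\mathbf{G}_{m(n)}$ commutes — the portion of $p_{n+1}(e)$ surviving the contraction to $\mathbf{G}_{m(n)}$ must literally be $p_n(e)$. The paper avoids the issue by advancing exactly one tree level per step and extending each existing path by precisely one edge at each end, into the child clique hosting the appropriate endpoint of $e$ in $G_{n+1}$; this extension automatically implements the redistribution of edges between $w'$ and $w''$, so no separate re-routing (and no level-jump function $m(n)$) is needed. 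With these two corrections your argument becomes the paper's proof; note also that your ``edge between two existing vertices without a split'' sub-case cannot occur for bonding maps of the form $G_{n+1}/\overline{e}_{n+1}=G_n$.
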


\begin{proof}
We will show that every graph-like continuum $(X,V,E)$ with vertex set $V$ and edge set $E$ embeds into $|\mathbf{G}|$ such that all vertices of $V$ are mapped to ends of $\mathbf{G}$, and all edges of $E$ are mapped to pairwise disjoint double rays in $\mathbf{G}$ between its corresponding ends.

By Theorem~\ref{thm_invlimit} we can represent $X$ as an inverse limit of finite connected multigraphs $\set{G_n}:{n \in \NN}$ where the bonding maps $f_n \colon G_{n+1} \to G_n$ correspond to the contraction of a single edge, i.e.\ where $G_{n+1} / \overline{e}_{n+1} = G_n$. We may assume that $G_0$ is the trivial graph on a single vertex without edges. 

We approximate the desired topological embedding $X \hookrightarrow |\mathbf{G}|$ combinatorially by two sequences of mappings $g_n$ and $p_n$ (for $n \in \NN$) with domain $V(G_n)$  and $E(G_n)$ respectively such that
\begin{enumerate}
    \item $g_n \colon V(G_n) \to T^n$ is injective,
    \item $p_n$ maps edges $e=xy$ of $E(G_n)$ to pairwise disjoint $K(g_n(x))-K(g_n(y))$ paths in $\mathbf{G}[T^{\leq n}]$, 
\end{enumerate}
and such that they are compatible in the following sense:
\begin{enumerate}[resume]
    \item $g_{n+1} (v)$ is a child of $g_n(f_n(v))$ for all $v \in V(G_{n+1})$, and
    \item $p_{n}(e) \subsetneq p_{n+1}(e)$ for all $e \in E(G_n)$
\end{enumerate}
The maps $g_n$ and $p_n$ are defined recursively. For $n=0$, the map $g_0$ sends the unique vertex of $G_0$ to the root of $T$, and $p_0 = \emptyset$ is fine.

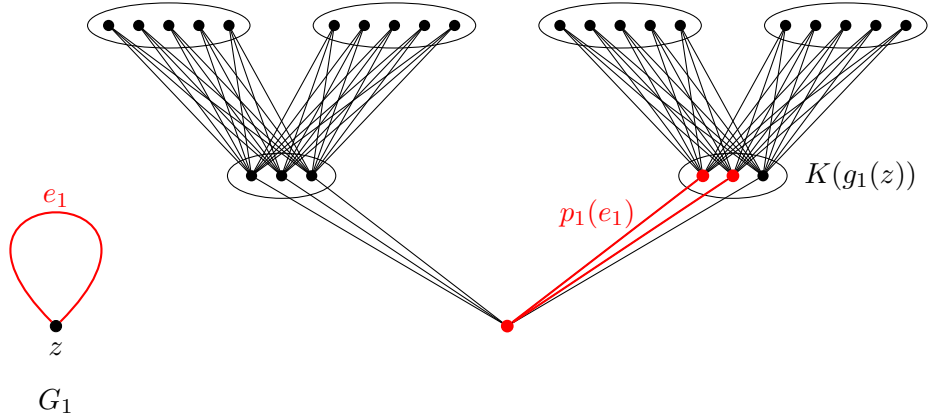
\begin{figure}[h]
    \centering
    \begin{tikzpicture}[scale=1]
     \foreach \point in {
	(3.0, 0),
	(5.6, 2),
	(6.0, 2),
	(6.3999999999999995, 2),
	(6.7, 4),
	(7.1000000000000005, 4),
	(7.500000000000001, 4),
	(7.8999999999999995, 4),
	(8.299999999999999, 4),
	(3.7, 4),
	(4.1000000000000005, 4),
	(4.5, 4),
	(4.9, 4),
	(5.3, 4),
	(-0.4, 2),
	(0.0, 2),
	(0.4, 2),
	(0.7, 4),
	(1.0999999999999999, 4),
	(1.4999999999999998, 4),
	(1.9000000000000001, 4),
	(2.3, 4),
	(-2.3, 4),
	(-1.9000000000000001, 4),
	(-1.5, 4),
	(-1.0999999999999999, 4),
	(-0.7, 4)
} {
            \def\this{vertex-\point}
            \node (\this) at \point  [circle,fill=black, inner sep=1.5pt] {} ;
        }

\draw (6.0, 2) ellipse (0.72cm and 0.3cm);
\draw (7.5, 4) ellipse (1.08cm and 0.3cm);
\draw (4.5, 4) ellipse (1.08cm and 0.3cm);
\draw (0.0, 2) ellipse (0.72cm and 0.3cm);
\draw (1.5, 4) ellipse (1.08cm and 0.3cm);
\draw (-1.5, 4) ellipse (1.08cm and 0.3cm);
\draw (3.0, 0)--(5.6, 2);
\draw (3.0, 0)--(6.0, 2);
\draw (3.0, 0)--(6.3999999999999995, 2);
\draw (5.6, 2)--(6.7, 4);
\draw (6.0, 2)--(6.7, 4);
\draw (6.3999999999999995, 2)--(6.7, 4);
\draw (5.6, 2)--(7.1000000000000005, 4);
\draw (6.0, 2)--(7.1000000000000005, 4);
\draw (6.3999999999999995, 2)--(7.1000000000000005, 4);
\draw (5.6, 2)--(7.500000000000001, 4);
\draw (6.0, 2)--(7.500000000000001, 4);
\draw (6.3999999999999995, 2)--(7.500000000000001, 4);
\draw (5.6, 2)--(7.8999999999999995, 4);
\draw (6.0, 2)--(7.8999999999999995, 4);
\draw (6.3999999999999995, 2)--(7.8999999999999995, 4);
\draw (5.6, 2)--(8.299999999999999, 4);
\draw (6.0, 2)--(8.299999999999999, 4);
\draw (6.3999999999999995, 2)--(8.299999999999999, 4);
\draw (5.6, 2)--(3.7, 4);
\draw (6.0, 2)--(3.7, 4);
\draw (6.3999999999999995, 2)--(3.7, 4);
\draw (5.6, 2)--(4.1000000000000005, 4);
\draw (6.0, 2)--(4.1000000000000005, 4);
\draw (6.3999999999999995, 2)--(4.1000000000000005, 4);
\draw (5.6, 2)--(4.5, 4);
\draw (6.0, 2)--(4.5, 4);
\draw (6.3999999999999995, 2)--(4.5, 4);
\draw (5.6, 2)--(4.9, 4);
\draw (6.0, 2)--(4.9, 4);
\draw (6.3999999999999995, 2)--(4.9, 4);
\draw (5.6, 2)--(5.3, 4);
\draw (6.0, 2)--(5.3, 4);
\draw (6.3999999999999995, 2)--(5.3, 4);
\draw (3.0, 0)--(-0.4, 2);
\draw (3.0, 0)--(0.0, 2);
\draw (3.0, 0)--(0.4, 2);
\draw (-0.4, 2)--(0.7, 4);
\draw (0.0, 2)--(0.7, 4);
\draw (0.4, 2)--(0.7, 4);
\draw (-0.4, 2)--(1.0999999999999999, 4);
\draw (0.0, 2)--(1.0999999999999999, 4);
\draw (0.4, 2)--(1.0999999999999999, 4);
\draw (-0.4, 2)--(1.4999999999999998, 4);
\draw (0.0, 2)--(1.4999999999999998, 4);
\draw (0.4, 2)--(1.4999999999999998, 4);
\draw (-0.4, 2)--(1.9000000000000001, 4);
\draw (0.0, 2)--(1.9000000000000001, 4);
\draw (0.4, 2)--(1.9000000000000001, 4);
\draw (-0.4, 2)--(2.3, 4);
\draw (0.0, 2)--(2.3, 4);
\draw (0.4, 2)--(2.3, 4);
\draw (-0.4, 2)--(-2.3, 4);
\draw (0.0, 2)--(-2.3, 4);
\draw (0.4, 2)--(-2.3, 4);
\draw (-0.4, 2)--(-1.9000000000000001, 4);
\draw (0.0, 2)--(-1.9000000000000001, 4);
\draw (0.4, 2)--(-1.9000000000000001, 4);
\draw (-0.4, 2)--(-1.5, 4);
\draw (0.0, 2)--(-1.5, 4);
\draw (0.4, 2)--(-1.5, 4);
\draw (-0.4, 2)--(-1.0999999999999999, 4);
\draw (0.0, 2)--(-1.0999999999999999, 4);
\draw (0.4, 2)--(-1.0999999999999999, 4);
\draw (-0.4, 2)--(-0.7, 4);
\draw (0.0, 2)--(-0.7, 4);
\draw (0.4, 2)--(-0.7, 4);

 \node (a) at (3.0, 0)  [circle,fill=red, inner sep=1.7pt] {} ;
        \node (b) at (5.6, 2)  [circle,fill=red, inner sep=1.7pt] {} ;
        \node (c) at (6.0, 2)  [circle,fill=red, inner sep=1.7pt] {} ;
        \draw[thick, red] (a) -- (b);
        \draw[thick, red] (a) -- (c);

 \node (z) at (-3, 0) [circle,fill=black, inner sep=1.5pt, label=below:$z$,draw] {} ;        
 \draw[thick, red] (z) .. controls (-5,2) and (-1,2) .. (z);
  \node (e1) at (-3, 1.7) {$\textcolor{red}{e_1}$} ;
   \node () at (-3, -1) {$G_1$} ;
  
    \node () at (4.2, 1.5) {$\textcolor{red}{p_1(e_1)}$} ;
    \node () at (7.7, 2) {$K(g_1(z))$} ;
 
\end{tikzpicture}
    \caption{Embeddings after $n=1$ steps.}
\end{figure}

Now suppose that $g_n$ and $p_n$ have been defined for some $n \in \NN$. 
First, we define $g_{n+1}$. By assumption, $G_{n+1}$ has one additional edge $e_{n+1} = vw$ where $G_{n+1} / \overline{e}_{n+1} = G_n$ and $f_n(e_{n+1}) = z$ is a vertex of $G_n$. If $e_{n+1}$ is a loop, we let $g_{n+1}(v) = g_{n+1}(w)$ be a child of $g_n(z)$ in $T$, and if $e_{n+1}$ is not a loop, we let $g_{n+1}(v) \neq g_{n+1}(w)$ be distinct children of $g_n(z)$ in $T$.  For all other vertices $x \in V(G_{n+1})$ we let $g_{n+1}(x)$ be an arbitrary child of $g_n(f_n(x))$ in $T$. Clearly, this assignment is injective and satisfies property (3).

Then we define $p_{n+1}$. Since $p_n(e)$ where paths in the graph $\mathbf{G}[T^{\leq n}]$, it is clear that we may extend $p_n(e)$ for all edges $e=xy \in E(G_n) \subset E(G_{n+1})$ by one additional edge at the front and back to obtain pairwise distinct $K(g_{n+1}(x))-K(g_{n+1}(y))$ paths in $\mathbf{G}[T^{\leq n+1}]$.
Now it remains to map the newly uncontracted edge $e_{n+1}=vw$. 
Since vertices of $G_n$ have degree at most $2n$, at most $2n$ vertices of $K(g_n(z))$ are end vertices of a path in the image of $p_n$, and so at least one vertex $\ell$ of $K(g_n(z))$ is still unused.
By the same reasoning at least three vertices are unused in every $K(g_{n+1}(u))$ with $u \in V(G_{n+1})$.
Now we pick two distinct unused vertices $a \in K(g_{n+1}(v))$ and $b \in K(g_{n+1}(w))$ and set $p_{n+1}(e_{n+1} )= a \ell b$.
This completes the recursive construction.

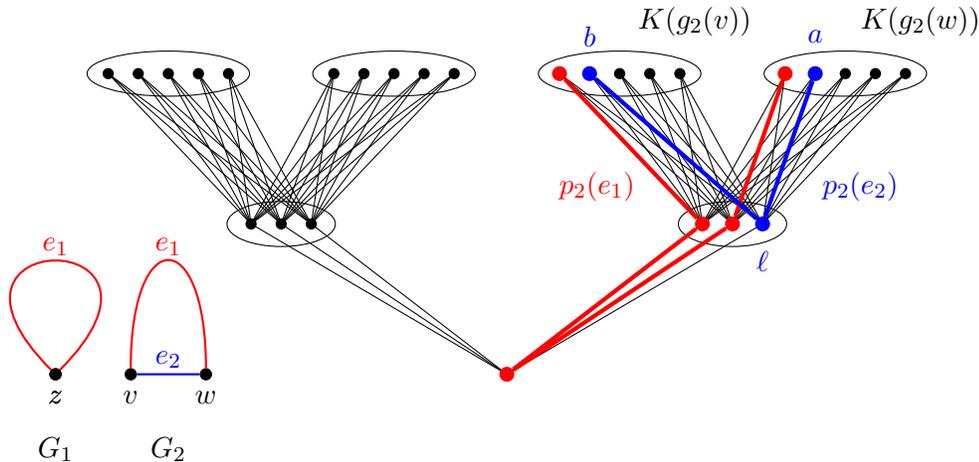
\begin{figure}[h]
    \centering
    \begin{tikzpicture}[scale=1]
     \foreach \point in {
	(3.0, 0),
	(5.6, 2),
	(6.0, 2),
	(6.3999999999999995, 2),
	(6.7, 4),
	(7.1000000000000005, 4),
	(7.500000000000001, 4),
	(7.8999999999999995, 4),
	(8.299999999999999, 4),
	(3.7, 4),
	(4.1000000000000005, 4),
	(4.5, 4),
	(4.9, 4),
	(5.3, 4),
	(-0.4, 2),
	(0.0, 2),
	(0.4, 2),
	(0.7, 4),
	(1.0999999999999999, 4),
	(1.4999999999999998, 4),
	(1.9000000000000001, 4),
	(2.3, 4),
	(-2.3, 4),
	(-1.9000000000000001, 4),
	(-1.5, 4),
	(-1.0999999999999999, 4),
	(-0.7, 4)
} {
            \def\this{vertex-\point}
            \node (\this) at \point  [circle,fill=black, inner sep=1.5pt] {} ;
        }

\draw (6.0, 2) ellipse (0.72cm and 0.3cm);
\draw (7.5, 4) ellipse (1.08cm and 0.3cm);
\draw (4.5, 4) ellipse (1.08cm and 0.3cm);
\draw (0.0, 2) ellipse (0.72cm and 0.3cm);
\draw (1.5, 4) ellipse (1.08cm and 0.3cm);
\draw (-1.5, 4) ellipse (1.08cm and 0.3cm);
\draw (3.0, 0)--(5.6, 2);
\draw (3.0, 0)--(6.0, 2);
\draw (3.0, 0)--(6.3999999999999995, 2);
\draw (5.6, 2)--(6.7, 4);
\draw (6.0, 2)--(6.7, 4);
\draw (6.3999999999999995, 2)--(6.7, 4);
\draw (5.6, 2)--(7.1000000000000005, 4);
\draw (6.0, 2)--(7.1000000000000005, 4);
\draw (6.3999999999999995, 2)--(7.1000000000000005, 4);
\draw (5.6, 2)--(7.500000000000001, 4);
\draw (6.0, 2)--(7.500000000000001, 4);
\draw (6.3999999999999995, 2)--(7.500000000000001, 4);
\draw (5.6, 2)--(7.8999999999999995, 4);
\draw (6.0, 2)--(7.8999999999999995, 4);
\draw (6.3999999999999995, 2)--(7.8999999999999995, 4);
\draw (5.6, 2)--(8.299999999999999, 4);
\draw (6.0, 2)--(8.299999999999999, 4);
\draw (6.3999999999999995, 2)--(8.299999999999999, 4);
\draw (5.6, 2)--(3.7, 4);
\draw (6.0, 2)--(3.7, 4);
\draw (6.3999999999999995, 2)--(3.7, 4);
\draw (5.6, 2)--(4.1000000000000005, 4);
\draw (6.0, 2)--(4.1000000000000005, 4);
\draw (6.3999999999999995, 2)--(4.1000000000000005, 4);
\draw (5.6, 2)--(4.5, 4);
\draw (6.0, 2)--(4.5, 4);
\draw (6.3999999999999995, 2)--(4.5, 4);
\draw (5.6, 2)--(4.9, 4);
\draw (6.0, 2)--(4.9, 4);
\draw (6.3999999999999995, 2)--(4.9, 4);
\draw (5.6, 2)--(5.3, 4);
\draw (6.0, 2)--(5.3, 4);
\draw (6.3999999999999995, 2)--(5.3, 4);
\draw (3.0, 0)--(-0.4, 2);
\draw (3.0, 0)--(0.0, 2);
\draw (3.0, 0)--(0.4, 2);
\draw (-0.4, 2)--(0.7, 4);
\draw (0.0, 2)--(0.7, 4);
\draw (0.4, 2)--(0.7, 4);
\draw (-0.4, 2)--(1.0999999999999999, 4);
\draw (0.0, 2)--(1.0999999999999999, 4);
\draw (0.4, 2)--(1.0999999999999999, 4);
\draw (-0.4, 2)--(1.4999999999999998, 4);
\draw (0.0, 2)--(1.4999999999999998, 4);
\draw (0.4, 2)--(1.4999999999999998, 4);
\draw (-0.4, 2)--(1.9000000000000001, 4);
\draw (0.0, 2)--(1.9000000000000001, 4);
\draw (0.4, 2)--(1.9000000000000001, 4);
\draw (-0.4, 2)--(2.3, 4);
\draw (0.0, 2)--(2.3, 4);
\draw (0.4, 2)--(2.3, 4);
\draw (-0.4, 2)--(-2.3, 4);
\draw (0.0, 2)--(-2.3, 4);
\draw (0.4, 2)--(-2.3, 4);
\draw (-0.4, 2)--(-1.9000000000000001, 4);
\draw (0.0, 2)--(-1.9000000000000001, 4);
\draw (0.4, 2)--(-1.9000000000000001, 4);
\draw (-0.4, 2)--(-1.5, 4);
\draw (0.0, 2)--(-1.5, 4);
\draw (0.4, 2)--(-1.5, 4);
\draw (-0.4, 2)--(-1.0999999999999999, 4);
\draw (0.0, 2)--(-1.0999999999999999, 4);
\draw (0.4, 2)--(-1.0999999999999999, 4);
\draw (-0.4, 2)--(-0.7, 4);
\draw (0.0, 2)--(-0.7, 4);
\draw (0.4, 2)--(-0.7, 4);

 \node (a) at (3.0, 0)  [circle,fill=red, inner sep=2pt] {} ;
        \node (b) at (5.6, 2)  [circle,fill=red, inner sep=2pt] {} ;
        \node (c) at (6.0, 2)  [circle,fill=red, inner sep=2pt] {} ;
        \draw[ultra thick, red] (a) -- (b);
        \draw[ultra thick, red] (a) -- (c);
        
         \node (b1) at (3.7, 4)  [circle,fill=red, inner sep=2pt] {} ;
        \node (c1) at (6.7, 4)  [circle,fill=red, inner sep=2pt] {} ;
        
        \draw[ultra thick, red] (b1) -- (b);
        \draw[ultra thick, red] (c1) -- (c);
        
          \node (x) at (7.1000000000000005, 4)  [circle,fill=blue, inner sep=2pt] {} ;
        \node (y) at (4.1000000000000005, 4)  [circle,fill=blue, inner sep=2pt] {} ;
        \node (ell) at (6.3999999999999995, 2)  [circle,fill=blue, inner sep=2pt] {} ;
        \draw[ultra thick, blue] (ell) -- (x);
        \draw[ultra thick, blue] (ell) -- (y);

 \node (z) at (-3, 0) [circle,fill=black, inner sep=1.5pt, label=below:$z$,draw] {} ;        
 \draw[thick, red] (z) .. controls (-5,2) and (-1,2) .. (z);
  \node (e1) at (-3, 1.7) {$\textcolor{red}{e_1}$} ;
   \node () at (-3, -1) {$G_1$} ;

    \node (z) at (-3, 0) [circle,fill=black, inner sep=1.5pt, label=below:$z$,draw] {} ;      
      \node (v) at (-2, 0) [circle,fill=black, inner sep=1.5pt, label=below:$v$,draw] {} ;      
        \node (w) at (-1, 0) [circle,fill=black, inner sep=1.5pt, label=below:$w$,draw] {} ;      
 \draw[thick, red] (v) .. controls (-2,2) and (-1,2) .. (w);
  \node (e1) at (-1.5, 1.7) {$\textcolor{red}{e_1}$} ;
  \draw[blue, thick] (v) -- (w);
  \node (e2) at (-1.5, 0.2) {$\textcolor{blue}{e_2}$} ;
   \node () at (-1.5, -1) {$G_2$} ;

    \node () at (4.2, 2.5) {$\textcolor{red}{p_2(e_1)}$} ;
      \node () at (7.7, 2.5) {$\textcolor{blue}{p_2(e_2)}$} ;
    \node () at (8.5, 4.7) {$K(g_2(w))$} ;
    \node () at (5.5, 4.7) {$K(g_2(v))$} ;
 
 \node () at (7.1, 4.5) {$\textcolor{blue}{a}$} ;
 \node () at (4.1, 4.5) {$\textcolor{blue}{b}$} ;
 \node () at (6.4, 1.5) {$\textcolor{blue}{\ell}$} ;
\end{tikzpicture}
    \caption{Embeddings after $n=2$ steps.}
\end{figure}

Next, observe that by property (4), for every edge $e=xy \in E$ the paths $(p_n(e))_{n \in \NN}$ give rise to pairwise disjoint double rays $p(e) = \bigcup_{n \in \NN} p_n(e)$ in $\mathbf{G}$. Fix an (order-preserving) homeomorphism $p_e \colon e \to p(e)$. 

For every $n \in \NN$ we consider the contractions $\mathbf{G}_n$ obtained by contracting all components of $\mathbf{G} - \mathbf{G}[T^{< n}]$, and write $q_n \colon \mathbf{G} \to \mathbf{G}_n$ for the corresponding contraction map. Note that every dummy vertex of $\mathbf{G}_n$ is of the form $v_t= \{ \blowup{T}_{\lfloor t \rfloor } \}$ for some $t \in T^n$. By Theorem~\ref{thm_limits} we have 
$|\mathbf{G}| = \varprojlim \mathbf{G}_n.$
By properties (1) and (2), the map
$$h_n \colon G_n \to \mathbf{G}_n, \; x \mapsto \begin{cases} q_n \circ p_e (x) & \text{ if } x \in e \in E(G_n) \\ v_{g_n(x)} & \text{ if } x \in V(G_n) \end{cases}$$
is a homeomorphic embedding such that the diagram
\begin{center}
\begin{tabular}{ccccccc}
$\mathbf{G}_1$ & $\xleftarrow{}$ & $\mathbf{G}_2$ & $\xleftarrow{}$ & $\mathbf{G}_3$ & $\xleftarrow{}$ & $\cdots$ \\
$\uparrow h_1$ &    & $\uparrow h_2$ &  & $\uparrow h_3$ &   &  \\ 
$G_1$ & $\xleftarrow{f_1}$ & $G_2$ & $\xleftarrow{f_2}$ & $G_3$ & $\xleftarrow{f_3}$ & $\cdots$
\end{tabular}
\end{center}
commutes.
By standard inverse limits arguments, see e.g.~\cite[2.22]{Nadler}, this yields a homeomorphic embedding of the respective inverse limits  $X = \varprojlim X_n $ into $|\mathbf{G}| = \varprojlim \mathbf{G}_n.$
\end{proof}

From this result, we re-obtain the following result from \cite[Theorem~15]{espinoza2020} as a corollary:

\begin{cor}
Every graph-like continuum can be embedded into the Freudenthal compactification of a locally finite graph.
\end{cor}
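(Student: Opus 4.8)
The plan is simply to observe that the universal object $\mathbf{G}$ constructed for Theorem~\ref{thm_universalgraphlike} is itself a locally finite connected graph, so that $|\mathbf{G}|$ is, by definition, the Freudenthal compactification of a locally finite connected graph; the corollary is then an immediate consequence of that theorem.

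First I would check that $\mathbf{G}$ is connected: the binary tree $T$ is connected, every blowup $K(t) \cong K_{2n+1}$ (for $t \in T^n$) is connected, and whenever $tt' \in E(T)$ the copies $K(t)$ and $K(t')$ are joined to each other completely, so any two vertices of $\mathbf{G}$ are linked by a path following a path of $T$. Next I would verify local finiteness, which is the only point that needs a moment's care because the blowups $K_{2n+1}$ have unbounded size. A vertex of $\mathbf{G}$ lying in $K(t)$ for $t \in T^n$ has exactly $2n$ neighbours inside $K(t)$, together with all vertices of $K(t')$ for each $T$-neighbour $t'$ of $t$; since $T$ has maximum degree $3$ and each such $K(t')$ has at most $2n+3$ vertices, the degree of this vertex is finite (at most $8n+9$, say). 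Hence every vertex of $\mathbf{G}$ has finite degree.

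Therefore $\mathbf{G}$ is a locally finite connected graph and $|\mathbf{G}|$ is its Freudenthal compactification, so the universality statement of Theorem~\ref{thm_universalgraphlike} says precisely that every graph-like continuum embeds into the Freudenthal compactification of a locally finite graph, as claimed. No genuine obstacle arises; the only thing to be alert to is that the blowup by complete graphs of unbounded (but individually finite) size still produces finite vertex degrees, which holds because the binary tree has bounded degree.
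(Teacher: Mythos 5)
Your argument is correct and is exactly the intended derivation: the paper states this corollary as an immediate consequence of Theorem~\ref{thm_universalgraphlike}, precisely because the blowup $\mathbf{G}$ of the binary tree is a locally finite connected graph and $|\mathbf{G}|$ is its Freudenthal compactification. Your verification of connectedness and of local finiteness (finite degrees despite the unbounded sizes of the $K_{2n+1}$, thanks to the bounded degree of the tree) is accurate and fills in the routine checks the paper leaves implicit.
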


\bibliographystyle{plain}
\bibliography{ref}

@book{diestel2015book,
	author = "R.~Diestel",
	edition = "5th",
	publisher = "Springer",
	title = {{Graph Theory}},
	year = "2015"
}

@article{ABOBAKER2021107408,
    title = {On thin continua},
    journal = {Topology and its Applications},
    volume = {290},
    pages = {107408},
    year = {2021},
    issn = {0166-8641},
    doi = {https://doi.org/10.1016/j.topol.2020.107408},
    url = {https://www.sciencedirect.com/science/article/pii/S0166864120303503},
    author = {Hussam Abobaker and Włodzimierz J. Charatonik},
    keywords = {Dendrite, Thin continuum, Universal},
    abstract = {We introduce the notion of thin continua: a continuum is thin if every nondegenerate subcontinuum has nonempty interior. We prove that such continua are locally connected and we investigate their mapping properties. We also investigate universal thin dendrites of a given order.}
}

@article{espinoza2020,
    author = {Espinoza, Benjamin and Gartside, Paul and Pitz, Max},
    title = {Graph-like compacta: Characterizations and {E}ulerian loops},
    journal = {Journal of Graph Theory},
    volume = {95},
    number = {2},
    pages = {209-239},
    keywords = {end, Eulerian, Freudenthal compactification, graph-like space, infinite graph, locally finite graph, topology},
    year = {2020},
}

@article{thomassenvella,
  title={Graph-like continua, augmenting arcs, and {M}enger's theorem},
  author={Thomassen, Carsten and Vella, Antoine},
  journal={Combinatorica},
  volume={28},
  number={5},
  pages={595--623},
  year={2008},
  publisher={Springer},
}

@article{mayer1986menger,
  title={The {M}enger curve},
  author={Mayer, JC and Oversteegen, LG and Tymchatyn, ED},
  journal={Dissertationes Math},
  volume={252},
  year={1986}
}

@article{krill2022universal,
  title={Universal graphs for the topological minor relation},
  author={Krill, Thilo},
  journal={arXiv preprint arXiv:2203.12643},
  year={2022}
}

@article{lehner2022universal,
  title={Universal planar graphs for the topological minor relation},
  author={Lehner, Florian},
  journal={arXiv preprint arXiv:2203.03477},
  year={2022}
}

@article{pach1981problem,
  title={A problem of {U}lam on planar graphs},
  author={Pach, J{\'a}nos},
  journal={European Journal of Combinatorics},
  volume={2},
  number={4},
  pages={357--361},
  year={1981},
  publisher={Academic Press}
}

@article{GartsidePitzgraphlike,
  title={{$N$-arc and $n$-circle connected graph-like spaces}},
  author={Paul Gartside and Max Pitz},
  journal={Topology and its Applications},
  volume={256},
  pages={7--25.},
  year={2019},
}

@article{DSurv,
  title={Locally finite graphs with ends: A topological approach, {I}. {B}asic theory},
  author={Diestel, Reinhard},
  journal={Discrete Mathematics},
  volume={311},
  number={15},
  pages={1423--1447},
  year={2011},
  publisher={Elsevier}
}

@article{DSurv2,
  title={Locally finite graphs with ends: A topological approach, {II}. {A}pplications},
  author={Diestel, Reinhard},
  journal={Discrete Mathematics},
  volume={310},
  number={20},
  pages={2750--2765},
  year={2010},
  publisher={Elsevier}
}

@inproceedings{kra,
  title={On two theorems of {D}yer},
  author={Krasinkiewicz, J},
  booktitle={Colloquium Mathematicum},
  volume={2},
  number={50},
  pages={201--208},
  year={1986}
}

@book{Nadler,
  title={Continuum theory: an introduction},
   AUTHOR = {Nadler, Jr., Sam B.},
  year={1992},
  publisher={CRC Press}
}

@article {agelosedgelength,
    AUTHOR = {Georgakopoulos, Agelos},
     TITLE = {Graph topologies induced by edge lengths},
   JOURNAL = {Discrete Math.},
  FJOURNAL = {Discrete Mathematics},
    VOLUME = {311},
      YEAR = {2011},
    NUMBER = {15},
     PAGES = {1523--1542},
}

@article{rado1964universal,
  title={Universal graphs and universal functions},
  author={Rado, Richard},
  journal={Acta Arithmetica},
  volume={9},
  number={4},
  pages={331--340},
  year={1964},
  publisher={Institute of Mathematics Polish Academy of Sciences}
}

@article{Eulerspaces,
  title={Eulerian spaces},
  author={P. Gartside and M. Pitz},
  journal={Memoirs of the Amer. Math. Soc.},
  note={To appear},
}
\end{document}